\numberwithin{equation}{section}
\theoremstyle{plain}
    \newtheorem{theorem}[equation]{Theorem}
    \newtheorem{lemma}[equation]{Lemma}
    \newtheorem*{theorem*}{Theorem}
    \newtheorem*{proposition*}{Proposition}
    \newtheorem*{corollary*}{Corollary}
    \newtheorem*{lemma*}{Lemma}
    \newtheorem*{conjecture*}{Conjecture}
    \newtheorem{definition-theorem}[equation]{Definition/Theorem}
    \newtheorem{definition-lemma}[equation]{Definition/Lemma}
\theoremstyle{definition}
    \newtheorem{definition}[equation]{Definition}
    \newtheorem{example}[equation]{Example}
    \newtheorem{examples}[equation]{Examples}
    \newcommand{\C}{\mathbb{C}}
    \newcommand{\N}{\mathbb{N}}
   	\renewcommand{\phi}{\varphi}
	\let\epsilon\varepsilon
    \newcommand{\CB}{\operatorname{CB}}
    \newcommand{\Compact}{\operatorname{K}}
    \newcommand{\Adjointable}{\operatorname{L}}
    \newcommand{\Multiplier}{\operatorname{M}}
    \newcommand{\Unitary}{\operatorname{U}}
\newcommand{\into}{\hookrightarrow}
\newcommand{\restrict}{\raisebox{-.5ex}{$|$}}
\newcommand{\id}{\mathrm{id}}
\DeclareMathOperator{\Ind}{Ind}
\DeclareMathOperator{\lspan}{span}
\DeclareMathOperator{\image}{image}
 \newcommand{\h}{\mathrm{h}}
\newcommand{\bra}[1]{{\langle{#1}\vert}}
\newcommand{\ket}[1]{{\vert {#1}\rangle}}
\newcommand{\TC}{\operatorname{L}^1}
\newcommand{\trace}{\operatorname{trace}}
\title{Trace-class operators on Hilbert modules and Haagerup tensor products}
\author{Tyrone Crisp}
\author{Michael Rosbotham}
\address{Department of Mathematics \& Statistics, University of Maine.
5752 Neville Hall, Room 237.
Orono, ME 04469 USA}
\email{tyrone.crisp@maine.edu \textrm{(corresponding author)}}
\email{michael.rosbotham@maine.edu}
\keywords{Hilbert module, Haagerup tensor product, trace-class operators}
\subjclass[2020]{46L08 (47L20, 47L25)}
\begin{document}

\begin{abstract}
We show that the space of trace-class operators on a Hilbert module over a commutative $C^*$-algebra, as defined and studied in earlier work of Stern and van Suijlekom (Journal of Functional Analysis, 2021), is completely isometrically isomorphic to  a Haagerup tensor product of the module with its operator-theoretic adjoint. This generalises a well-known property of Hilbert spaces. In the course of proving this, we also obtain a new proof of a result of Stern-van Suijlekom concerning the equivalence between two definitions of trace-class operators on Hilbert modules.
\end{abstract}

\maketitle

\section{Introduction}

The study of continuous families of Hilbert spaces and of Hilbert-space operators leads naturally to the notion of Hilbert modules over commutative $C^*$-algebras, and operators on these modules \cite{Kaplansky,Paschke,Takahashi}. In \cite{SvS} Stern and van Suijlekom defined and studied Schatten classes of operators on Hilbert modules over a commutative $C^*$-algebra $A$, giving two equivalent characterisations of these operators: one in terms of an $A$-valued trace, and the other in terms of the family of $\C$-valued traces arising from localisation at each point of the spectrum $\widehat{A}$.

In this paper we focus on  the space $\TC_A(F)$ of trace-class operators on a Hilbert module $F$ over a commutative $C^*$-algebra $A$. We first observe that the definition of trace-class operators given in \cite{SvS} can be extended by using frames of multipliers, as introduced by Raeburn and Thompson in \cite{RT}. This modified definition applies to some situations where that of \cite{SvS} does not apply (eg, to modules of the form $A^k$ where $A$ is a non-$\sigma$-unital $C^*$-algebra). In situations where both definitions apply they agree, and the extra flexibility afforded by frames of multipliers is sometimes useful in simplifying computations.

The main result of this paper concerns the connection between the space of trace-class operators $\TC_A(F)$ and the \emph{Haagerup tensor product} $\otimes^{\h}$ from operator-space theory \cite{Effros-Kishimoto}. It is known (\cite{Blecher-Paulsen-tensor,Effros-Ruan-self-duality}) that if $H$ is a separable Hilbert space then the map
\[
\phi_H : H^* \otimes^{\h} H \to \TC(H),\qquad \bra{\xi}\otimes \ket{\eta}\mapsto \ket{\eta}\bra{\xi}
\]
is isometric, and indeed completely isometric when we equip $\TC(H)$ with the operator-space structure coming from realising $\TC(H)$ as the dual of $\Compact(H)$. Our main result extends this isomorphism to Hilbert modules over an arbitrary commutative $C^*$-algebra $A$: we prove  in Theorem \ref{thm:ci} that if $F$ is such a module, countably generated by multipliers, then the map
\[
\phi_F : F^* \otimes_A^{\h} F \to \TC_A(F),\qquad \bra{\xi}\otimes \ket{\eta}\mapsto \ket{\eta}\bra{\xi}
\]
is a completely isometric isomorphism, when $\TC_A(F)$ is given a natural operator-space structure, and where $F^*$ denotes the operator-theoretic adjoint of $F$, i.e., $F^* = \Compact_A(F,A)$. In the course of proving this result, we also obtain a new proof of the equivalence between the two definitions of trace-class operators established in \cite{SvS}.

Our result complements earlier work on Haagerup tensor products of Hilbert modules, primarily due to Blecher (eg, \cite{Blecher-newapproach}). Blecher proved, among other things, that if $F$ is a Hilbert module over any $C^*$-algebra $A$ (not necessarily commutative), then $F\otimes^{\h}_A F^*$ is isomorphic to the $C^*$-algebra $\Compact_A(F)$ of $A$-compact operators on $F$. The restriction to commutative $C^*$-algebras in our isomorphism $F^*\otimes^{\h}_A F \cong \TC_A(F)$ appears to us to be essential, because of the difficulties associated with defining an $A$-valued trace when $A$ is not commutative.

Haagerup tensor products of the form $F^*\otimes^{\h}_A F$ play a central role in the descent theory developed in \cite{Crisp-descent}. A major challenge in applying that theory is to compute these tensor products in useful, concrete terms, and the results of this paper show how this can be done in the special case where $A$ is commutative and $F$ is a right Hilbert $A$-module. We present an example of this kind, related to unitary group representations, at the end of this paper, in Section \ref{sec:example}.  See  \cite{Crisp-gluing} and \cite{Crisp-basechange} for computations of $F^*\otimes^{\h}_A F$ in other settings.

The Haagerup tensor product belongs to operator-space theory, but its usefulness is not confined to purely operator-space-theoretic applications. To emphasise this point, and to make our paper more accessible to readers interested in Hilbert modules but not necessarily well-versed in operator-space theory, we have divided the main argument into  two sections. Section \ref{sec:free-modules}, which contains our  proof of (part of) \cite[Theorem 3.18]{SvS}, does not require any operator-space background on the part of the reader. In Section \ref{sec:oss}, which concerns the operator-space structure on the space of trace-class operators, we refer more freely to the literature on operator spaces. Before that, in Section \ref{sec:preliminaries}, we review some essential background from \cite{SvS} and \cite{RT}, and indicate how to use the technology of the latter to extend the reach of the former.

\section{Frames of multipliers and trace-class operators}\label{sec:preliminaries}

The purpose of this section is to recall some background and establish notation, and to point out that the notion of trace-class operators studied in \cite{SvS} admits an easy and useful generalisation using frames of multipliers \cite{RT}.

We will assume that the reader is familiar with the basic theory of Hilbert modules, as explained in \cite{Lance} for exampe; see also  \cite{Blecher-newapproach} for a presentation of much of the basic theory from a point of view closely aligned with the one taken here. 

Let $A$ be a commutative $C^*$-algebra. In this paper all Hilbert modules are right modules, and all $A$-valued inner products $\langle\ |\ \rangle$ are $A$-linear in their right-hand argument. We write $\Adjointable_A$ and $\Compact_A$ for the spaces of adjointable and of compact operators (respectively) between Hilbert $A$-modules, omitting the $A$ when $A=\C$. If $F$ is a Hilbert $A$-module then for each $\xi\in F$ we have operators $\bra{\xi}\in \Compact_A(F,A)$ and $\ket{\xi}\in \Compact_A(A,F)$, defined by $\bra{\xi}:\eta\mapsto \langle \xi\,|\,\eta\rangle$ and $\ket{\xi}:a\mapsto \xi a$. The map $\xi\mapsto \ket{\xi}$ is an isometric isomorphism from $F$ to $\Compact_A(A,F)$, and because of this we sometimes blur the distinction between $\xi$ and $\ket{\xi}$.

\subsection*{Localisation:} For each Hilbert $A$-module $F$, and for each point $x$ in the spectrum $\widehat{A}$, the quotient  of $F$ by its closed submodule $\{\xi  \in F\ |\ \langle\xi\, |\, \xi\rangle(x)=0\}$ is a Hilbert space, which we denote by $F_x$. The image of $\xi\in F$ in  $F_x$ is denoted $\xi_x$, and the $\C$-valued inner product on $F_x$ is $\langle \xi_x\, |\, \eta_x\rangle_{F_x}\coloneqq \langle \xi\, |\, \eta\rangle_F(x)$.   This \emph{localisation} procedure is a $*$-functor: each adjointable operator $t\in \Adjointable_A(F,E)$ induces, for each $x\in \widehat{A}$, a bounded operator $t_x : F_x\to E_x$, satisfying $t(\xi)_x = t_x (\xi_x)$ for all $\xi\in F$; and we have $(t\circ r)_x = t_x\circ r_x$ and $(t^*)_x = (t_x)^*$ for all adjointable operators $t$ and $r$. For each $t\in \Adjointable_A(F,E)$ the function $x\mapsto \|t_x\|$ is  bounded on $\widehat{A}$, and $\|t\|_{\Adjointable_A(F,E)}= \sup_{x\in \widehat{A}} \|t_x\|$. 

\begin{example}
Let $H$ be a separable Hilbert space. The space $C_0(\widehat{A},H)$ of continuous, vanishing-at-infinity, $H$-valued functions on the spectrum of $A$ is a Hilbert module over $A\cong C_0(\widehat{A})$: the module structure is by pointwise multiplication, and the $A$-valued inner product is given by $\langle f\, |\, g\rangle_{C_0(\widehat{A},H)}(x)\coloneqq \langle f(x)\, |\, g(x)\rangle_H$ (where $x\in \widehat{A}$). Localisation gives a $C^*$-algebra isomorphism  $\Compact_A(C_0(\widehat{A},H))\cong C_0(\widehat{A},\Compact(H))$.
\end{example}

\subsection*{Frames of multipliers:} We shall briefly recall some facts about multiplier frames for Hilbert modules as developed in \cite{RT}. We continue to assume that $A$ is a commutative $C^*$-algebra, although commutativity is not necessary for much of this section. We denote by $\Multiplier(A)$ the multiplier algebra of $A$.

If $F$ is a Hilbert $A$-module then we define $\Multiplier(F)\coloneqq \Adjointable_A(A,F)$, which is a Hilbert $\Multiplier(A)$-module with inner product $\langle r\,|\, s\rangle = r^*s \in \Adjointable_A(A)=\Multiplier(A)$. The module $F$ sits inside $\Multiplier(F)$ as the space of compact operators. Since the composition of an adjointable operator with a compact operator is compact,  for each $\xi\in F$ and each $\mu\in \Multiplier(F)$ we have $\langle \xi\, |\, \mu\rangle \in A$.

Each adjointable operator $t\in \Adjointable_A(F,E)$ extends to an adjointable operator $t\in \Adjointable_{\Multiplier(A)}(\Multiplier(F),\Multiplier(E))$, namely the operator $r\mapsto t\circ r$. This extension procedure is a $*$-functor. If $t$ is compact then  $t(\mu)\in E$ for every $\mu\in \Multiplier(F)$.

We say that a Hilbert $A$-module $F$ is \emph{countably generated by multipliers} if there is a countable subset $G\subseteq \Multiplier(F)$ such that $F=\overline{\lspan}\{ga\ |\ g\in G,\ a\in A\}$. For example, $A$ is countably generated by multipliers as a module over itself, since the single multiplier $\id_A$ suffices as a generator. 

\begin{definition}[\cite{RT}] 
A \emph{frame of multipliers} for $F$ is a sequence $(\beta_i)_{i\in \N}$ in $\Multiplier(F)$ such that for all $\xi,\eta\in F$ we have
\(
\langle \xi\, |\, \eta\rangle = \sum_{i=1}^\infty \langle \xi\, |\, \beta_i\rangle\langle \beta_i \, |\, \eta\rangle,
\)
where the sum is required to converge in the norm topology on $A$. A \emph{frame} for $F$ is a frame of multipliers $(\beta_i)$ with $\beta_i\in F$ for all $i$.
\end{definition}

Frames of multipliers exist more generally than do frames: for instance, while the $A$-module $A$ might not possess a frame,  the sequence $(\id_A,0,0,\ldots)$ is a frame of multipliers. Even in situations where frames do exist, it is sometimes more convenient to use frames of multipliers. For instance:

\begin{example}\label{example:free-module-frame}
Let $H$ be a separable Hilbert space, with orthonormal basis $(\epsilon_i)$, and consider the Hilbert $A$-module $C_0(\widehat{A},H)$. For each $i$ we consider the multiplier $\widetilde{\epsilon_i}\in \Adjointable_A(A,C_0(\widehat{A},H))$ defined by $\widetilde{\epsilon_i}(a)(x)\coloneqq a(x)\epsilon_i$. The sequence $(\widetilde{\epsilon_i})$ is then a frame of multipliers for $C_0(\widehat{A},H)$.

In the case of $H= \C^k$ we identify $C_0(\widehat{A},\C^k)\cong A^k$ in the obvious way. Taking $\epsilon_1,\ldots,\epsilon_k$ to be the standard basis for $\C^k$, we arrive at a finite frame of multipliers $(\widetilde{\epsilon_i})$ for $A^k$, where $\widetilde{\epsilon_i}:A\to A^k$ inserts $a$ into the $i$th coordinate (and leaves the other coordinates $0$). This frame of multipliers is a frame if and only if $A$ is unital.
\end{example}

The following theorem summarises the main results of \cite{RT}.

\begin{theorem}[{\cite{RT}}]\label{thm:frame-properties}
The following are equivalent for a Hilbert module $F$ over a commutative $C^*$-algebra $A$:
\begin{enumerate}[\rm(a)]
\item $F$ is countably generated by multipliers.
\item There is a separable Hilbert space $H$ and an adjointable map $\theta: F\to C_0(\widehat{A},H)$ with $\theta^*\theta=\id_F$.
\item  $F$ admits a frame of multipliers.
\end{enumerate}
Moreover, if $(\beta_i)$ is a frame of multipliers for $F$ then for each $\eta\in F$ the sum $\sum_{i=1}^\infty \ket{\beta_i}\langle \beta_i\, |\, \eta\rangle$ converges in norm to $\eta$.
\hfill\qed
\end{theorem}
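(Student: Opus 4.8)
The plan is to prove (c)$\Rightarrow$(b) together with the reconstruction formula in one stroke, then close the easy loop (b)$\Rightarrow$(c),(a), and finally tackle (a)$\Rightarrow$(b), which I expect to be the only hard part. The organising principle throughout is localisation, which converts module-theoretic statements over $A=C_0(\widehat A)$ into \emph{uniform-in-$x$} statements about Hilbert spaces, exploiting that the norm on $A$ is the supremum norm. Given a frame of multipliers $(\beta_i)$, localising each $\beta_i\in\Adjointable_A(A,F)$ at a point $x$ (where $A_x\cong\C$) produces a vector $b_i(x)\in F_x$, and the frame identity localises --- using surjectivity of $F\to F_x$ --- to the statement that $(b_i(x))_i$ is a Parseval frame for the Hilbert space $F_x$. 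I would then define the analysis map $\theta\colon F\to C_0(\widehat A,\ell^2)$ by $\theta\xi=(\langle\beta_i|\xi\rangle)_i$; that $\theta\xi$ lands in $C_0(\widehat A,\ell^2)$ is where the supremum norm enters, since the frame identity with $\eta=\xi$ says precisely that the tails $\sum_{i>N}|\langle\beta_i|\xi\rangle(x)|^2$ tend to $0$ uniformly in $x$, so $\theta\xi$ is a uniform limit of its continuous, vanishing, finitely-supported truncations. The same identity gives $\langle\theta\xi|\theta\eta\rangle=\langle\xi|\eta\rangle$, so $\theta$ is isometric.

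The subtle point is adjointability: $\theta$ need not be the operator-norm limit of its truncations, so I cannot read off its adjoint from them. Instead I would produce the adjoint by hand as the synthesis map $Tf=\sum_i\beta_i f_i$, where $f_i=\langle\widetilde{\epsilon_i}|f\rangle\in A$ are the components of $f$. Localising once more, $(Tf)_x=\sum_i f_i(x)\,b_i(x)$, and since the synthesis operator of a Parseval frame is a co-isometry this yields $\|(Tf)_x\|\le\|f(x)\|_{\ell^2}$ pointwise; taking the supremum over $x$ shows $T$ is well defined and contractive first on the (dense) finitely-supported $f$ and then on all of $C_0(\widehat A,\ell^2)$. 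A short computation gives $\langle\theta\xi|f\rangle=\langle\xi|Tf\rangle$, so $\theta$ is adjointable with $\theta^*=T$; this is exactly (b). Finally, $\theta$ being isometric forces $\theta^*\theta=\id_F$, and unwinding $\theta^*\theta\eta=T\theta\eta=\sum_i\beta_i\langle\beta_i|\eta\rangle$ is the asserted reconstruction formula.

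Closing the loop is then straightforward. Given $\theta$ with $\theta^*\theta=\id_F$ and a countable orthonormal basis $(\epsilon_i)$ of the separable $H$, put $\beta_i:=\theta^*\widetilde{\epsilon_i}\in\Multiplier(F)$. The elementary reconstruction $g=\sum_i\widetilde{\epsilon_i}\langle\widetilde{\epsilon_i}|g\rangle$ in $C_0(\widehat A,H)$ --- valid by the same relative-compactness-of-the-range argument that makes the finite-rank projections converge uniformly to the identity on $\overline{g(\widehat A)}$ --- gives, upon applying $\theta^*$ to $g=\theta\eta$, the identity $\eta=\sum_i\beta_i\langle\beta_i|\eta\rangle$ for all $\eta\in F$. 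Pairing this with $\langle\xi|-\rangle$ shows $(\beta_i)$ is a frame of multipliers, establishing (c), while the identity itself shows $F=\ol{\lspan}\{\beta_i a : a\in A\}$, so $F$ is countably generated by multipliers, establishing (a).

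The genuinely hard implication is (a)$\Rightarrow$(b), and I expect it to be the main obstacle. Starting from countable multiplier generators $(g_n)$, rescaling by positive scalars so that $\sum_n\|g_n\|^2<\infty$ makes $V\xi:=(\langle g_n|\xi\rangle)_n$ a compact adjointable map $F\to C_0(\widehat A,\ell^2)$ which is injective because the $g_n$ generate, with positive compact frame operator $S=V^*V=\sum_n\ket{g_n}\bra{g_n}$ that is injective with dense range. One is tempted to set $\theta=VS^{-1/2}$, but here lies the obstruction that keeps the theorem from being routine: when $F$ is infinite-dimensional $S$ is not invertible in $\Adjointable_A(F)$, so $S^{-1/2}$ is merely an unbounded affiliated operator and this formula does not define an adjointable map. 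Overcoming this is the crux, and I would do so by a Kasparov-stabilisation-type induction, constructing the isometric embedding directly by playing the generators against the infinitely many \emph{spare} directions in $C_0(\widehat A,\ell^2)$. The reason one must work with frames of \emph{multipliers} rather than vectors in $F$, and the reason this is not simply the classical stabilisation theorem, is that $A$ may fail to be $\sigma$-unital: the real technical difficulty is controlling vanishing at infinity on $\widehat A$, i.e.\ running the stabilisation without an approximate unit in $A$.
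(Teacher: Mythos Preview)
The paper does not actually prove this theorem: it is stated with a citation to \cite{RT} and a terminal \qed, with no proof environment at all. So there is no ``paper's proof'' to compare against; your proposal is a reconstruction of the Raeburn--Thompson argument.

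Your treatment of (c)$\Rightarrow$(b), (b)$\Rightarrow$(c),(a), and the reconstruction formula is correct and is the natural approach. The analysis/synthesis construction via localisation is exactly how one proceeds, and your care over the adjointability of $\theta$ --- constructing $\theta^*$ explicitly as the synthesis map rather than trying to pass an adjoint through an operator-norm limit that need not exist --- is the right way to handle that subtlety. Your argument that finitely supported elements are dense in $C_0(\widehat{A},\ell^2)$ via relative compactness of the range, and the corresponding reconstruction $g=\sum_i\widetilde{\epsilon_i}\langle\widetilde{\epsilon_i}\,|\,g\rangle$, is also sound.

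The only genuine gap is (a)$\Rightarrow$(b). You correctly diagnose the obstruction: the na\"ive frame operator $S=\sum_n\ket{g_n}\bra{g_n}$ is compact and injective but not invertible in $\Adjointable_A(F)$, so $VS^{-1/2}$ is not available as an adjointable isometry; and because $A$ may fail to be $\sigma$-unital, the classical Kasparov stabilisation theorem does not apply as stated. You then gesture at a ``Kasparov-stabilisation-type induction'' exploiting the spare directions in $C_0(\widehat{A},\ell^2)$, but you do not carry it out. This is indeed the substantive content of \cite{RT}, where the authors prove a multiplier version of Kasparov's theorem; your outline points in the right direction but stops short of a proof. Since the present paper simply imports the result, this is not a defect relative to the paper, but you should be aware that this implication is where all the work in \cite{RT} lies.
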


The localisation procedure for adjointable operators applies in particular to elements of $\Multiplier(F)=\Adjointable_A(A,F)$: if $\mu\in \Multiplier(F)$ then for each $x\in \widehat{A}$ we have $\mu_x\in \Adjointable(\C, F_x)\cong F_x$. The inner-product formula $\langle \xi_x\, |\, \eta_x\rangle = \langle \xi\, |\, \eta\rangle(x)$ continues to hold when $\xi$ and $\eta$ are multipliers of $F$. Stern and van Suijlekom observed in \cite[Proposition 2.10]{SvS} that frames in Hilbert modules localise to give frames in Hilbert spaces, and the same is true, for the same reason, of frames of multipliers.

\subsection*{Trace-class operators on Hilbert modules:} We now recall the definition of trace-class operators from \cite{SvS}, extended by using frames of multipliers. Let $A$ be a commutative $C^*$-algebra, and let $F$ be a Hilbert $A$-module that is countably generated by multipliers.

\begin{definition}
For each frame of multipliers $\beta$ for $F$, and each positive operator $t\in \Adjointable_A(F)$, we define
\(
\trace_\beta(t) \coloneqq \sum_{i=1}^\infty \langle \beta_i\, |\, t\beta_i\rangle
\) 
if the series converges in the norm on $\Multiplier(A)$ \emph{to an element of $A$}; otherwise $\trace_{\beta}(t)$ is undefined.
\end{definition}

Note that the series defining $\trace_\beta(t)$ might well converge in $\Multiplier(A)$ to an element not in $A$, as the next examples make clear.

\begin{examples}\label{example:Ak-trace}
\begin{enumerate}[\rm(1)]
\item Consider the Hilbert $A$-module $A^k$, equipped with the standard frame of multipliers $\widetilde{\epsilon_1},\ldots,\widetilde{\epsilon_k}$ (see Example \ref{example:free-module-frame}). Each  positive $t\in \Adjointable_A(A^k)$ can be represented by a $k\times k$ matrix over $\Multiplier(A)$, and the sum defining $\trace_{\widetilde{\epsilon}}(t)$ is the sum of the diagonal entries of this matrix. This sum certainly exists in $\Multiplier(A)$, but if it does not lie in $A$ then $\trace_{\widetilde{\epsilon}}(t)$ is undefined.
\item Let $A=C_0(\mathbb{N})$ and $F=C_0(\mathbb{N},\ell^2)$, where $\ell^2=\ell^2(\N)$. Let $t\in \Compact_A(F)=C_0(\mathbb{N},\Compact(\ell^2))$ be the positive operator defined by $t(n) = \frac{1}{n} \sum_{i=1}^n \ket{\epsilon_i}\bra{\epsilon_i}$, where $(\epsilon_i)$ is the standard basis for $\ell^2$. We have for each $j,n\in \N$
\[
\langle \widetilde{\epsilon_j}\, |\, t\widetilde{\epsilon_j}\rangle(n) = \begin{cases} \frac{1}{n} & \text{if }j\leq n \\ 0 & \text{if }j>n.\end{cases}
\]
Thus all of the partial sums of $\trace_{\widetilde{\epsilon}}(t)$ lie in $A$, but the sum does not converge in norm---rather, it converges (to $1$) in the strict topology on $\Multiplier(A)$. So $\trace_{\widetilde{\epsilon}}(t)$ is undefined. Note that in this example we have $t(n)\in \TC(\ell^2)$ for every $n$, but the function $n\mapsto \trace(t(n))$ does not vanish at infinity.
\end{enumerate}
\end{examples}

\begin{theorem}[{\cite[Theorem 3.5]{SvS}}] \label{thm:tc-pointwise}
Let $t\in \Adjointable_A(F)$ be a positive adjointable operator. The following are equivalent:
\begin{enumerate}[\rm(a)]
\item $\trace_\beta(t)$ exists  in $A$, for some frame of multipliers $\beta$ for $F$.
\item $\trace_\beta(t)$ exists  in $A$, for every frame of multipliers $\beta$ for $F$.
\item for each $x\in \widehat{A}$ the operator $t_x\in \Adjointable(F_x)$ is of trace class, and the function $x\mapsto \trace(t_x)$ lies in $A$ (i.e., it is a $C_0$-function on $\widehat{A}$).
\end{enumerate}
If these equivalent conditions are satisfied, then for each frame of multipliers $\beta$ for $F$, and each $x\in \widehat{A}$, we have $\trace_\beta(t)(x) = \trace(t_x)$. In particular, $\trace_\beta(t)$ is independent of $\beta$.
\end{theorem}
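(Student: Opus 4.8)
The plan is to reduce the whole statement to a pointwise computation at each $x\in\widehat A$ together with a Dini-type convergence criterion. For any frame of multipliers $\beta=(\beta_i)$ the partial sums
\[
s_n \coloneqq \sum_{i=1}^n \langle \beta_i\, |\, t\beta_i\rangle
\]
form an increasing sequence of positive elements of $\Multiplier(A)=C_b(\widehat A)$, and the only question is whether $(s_n)$ converges in norm to an element of $A$. The first ingredient is the Hilbert-space fact that if $T$ is a positive operator on a Hilbert space $H$ and $(f_i)$ is a Parseval frame (so $\sum_i|\langle g\,|\,f_i\rangle|^2=\|g\|^2$ for all $g$), then $\sum_i \langle f_i\,|\,Tf_i\rangle = \trace(T)$ in $[0,\infty]$, independently of the frame: one writes $\langle f_i\,|\,Tf_i\rangle = \|T^{1/2}f_i\|^2$, expands in an orthonormal basis, interchanges the resulting nonnegative double sum, and applies the Parseval identity. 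Combining this with the localisation formula $\langle \beta_i\,|\,t\beta_i\rangle(x) = \langle (\beta_i)_x\,|\,t_x(\beta_i)_x\rangle_{F_x}$ and the fact (recalled after Theorem \ref{thm:frame-properties}) that $((\beta_i)_x)_i$ is a Parseval frame for $F_x$, I obtain $s_n(x)\nearrow \trace(t_x)$ for every $x$. Thus the pointwise limit of $(s_n)$ is the function $x\mapsto\trace(t_x)$, and crucially this is the \emph{same} for every frame of multipliers.

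The second ingredient is a Dini-type lemma: an increasing sequence $0\le s_1\le s_2\le\cdots$ of positive functions in $C_b(\widehat A)$ converges in norm to an element of $C_0(\widehat A)$ if and only if its pointwise limit $f$ is finite everywhere and lies in $C_0(\widehat A)$. The forward direction is immediate, since norm (hence uniform) convergence forces the limit to be the pointwise limit. For the converse I set $g_n\coloneqq f-s_n\ge 0$; given $\epsilon>0$ the set $K\coloneqq\{x:f(x)\ge\epsilon\}$ is compact because $f\in C_0$, one has $g_n\le f<\epsilon$ off $K$, and Dini's theorem applied to the decreasing sequence $g_n\downarrow 0$ on the compact set $K$ gives uniform convergence there; hence $\|g_n\|_\infty\to 0$.

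With these two ingredients the three implications are short. For (a)$\Rightarrow$(c): if $s_n\to\trace_\beta(t)\in A$ in norm for some $\beta$, then evaluating at $x$ shows $\trace(t_x)=\trace_\beta(t)(x)<\infty$, so each $t_x$ is trace class and $x\mapsto\trace(t_x)=\trace_\beta(t)$ lies in $A$; this also establishes the final displayed formula. For (c)$\Rightarrow$(b): granting (c), the common pointwise limit $x\mapsto\trace(t_x)$ is a finite element of $C_0(\widehat A)$, so the Dini lemma applies to \emph{every} frame of multipliers $\beta$ and yields $s_n\to(x\mapsto\trace(t_x))$ in norm, i.e.\ $\trace_\beta(t)$ exists in $A$ and equals this function. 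Finally (b)$\Rightarrow$(a) is trivial, since $F$ admits at least one frame of multipliers by Theorem \ref{thm:frame-properties}.

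The main obstacle is precisely the gap between the two natural topologies on $\Multiplier(A)$: the partial sums always converge pointwise (indeed strictly) to $x\mapsto\trace(t_x)$, whereas existence of $\trace_\beta(t)$ demands norm convergence to a $C_0$-function, and it is the Dini-type argument — leaning essentially on the hypothesis that the limit lies in $A$ — that bridges this gap. Examples \ref{example:Ak-trace} illustrate the failure mode: without that hypothesis the pointwise limit can fail to vanish at infinity (or even to be continuous), so that $\trace_\beta(t)$ is genuinely undefined despite every $t_x$ being trace class.
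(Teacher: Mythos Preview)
Your argument is correct and is essentially the proof the paper is invoking: the paper does not give an independent argument but observes that the proof of \cite[Theorem 3.5]{SvS} goes through verbatim for frames of multipliers, the two required inputs being exactly the ones you isolate (compatibility of localisation with inner products, and the fact that a frame of multipliers localises to a Parseval frame for each $F_x$). Your write-up simply makes the Dini step explicit, which is the heart of the \cite{SvS} argument as well.
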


\begin{proof}
Despite the extra generality coming from our use of frames of multipliers, the proof is identical to \cite[Theorem 3.5]{SvS}. The same argument goes through because localisation is still compatible with inner products, and frames of multipliers for Hilbert modules still give rise to frames for Hilbert spaces upon localisation. 
%
%
\end{proof}

In view of this result, we shall henceforth just write $\trace$ instead of $\trace_\beta$.

\begin{definition}\label{def:tc}
An operator $t\in \Adjointable_A(F)$ is of \emph{trace class} if $\trace(|t|)$ is defined in $A$. We let $\TC_A(F)$  denote the set of trace-class operators  on $F$.
\end{definition}

When $A=\C$, Hilbert $A$-modules are the same thing as Hilbert spaces, and the above definition of trace-class operators coincides with the usual one. More generally, if $F$ admits a frame then the space $\TC_A(F)$ defined above coincides with the one studied in \cite{SvS}. The difference between our set-up and that of \cite{SvS} is, firstly, that $\TC_A(F)$ is now also defined when $F$ has a frame of multipliers but not a frame; and secondly, that even for modules that do admit a frame, one can compute the trace using any frame of multipliers. This extra flexibility is sometimes useful, cf. Example \ref{example:Ak-trace}(1).


As pointed out in \cite{SvS}, it is not clear from Definition \ref{def:tc} alone that the set $\TC_A(F)$ has many nice properties; for example, it is not clear that this set is closed under addition. This and other properties of $\TC_A(F)$ will follow from the results of  the next two sections.

\section{Trace-class operators on $C_0(\widehat{A},H)$}\label{sec:free-modules}

In this section we use the Haagerup tensor product to give an alternative proof of the $p=1$ case of \cite[Theorem 3.18]{SvS}. Let $A$ be a commutative $C^*$-algebra, and let $H$ be a separable Hilbert space. The localisation procedure recalled in Section \ref{sec:preliminaries} associates, to each $t\in \Adjointable_A(C_0(\widehat{A},H))$, an operator-valued function $\widehat{A} \xrightarrow{x\mapsto t_x} \Adjointable(H)$. Theorem \ref{thm:tc-pointwise} shows that if the operator $t$ is of trace class, then the function $x\mapsto t_x$ lies in $C_0(\widehat{A},\TC(H))$, where the space $\TC(H)$ of trace-class operators on $H$ is given the norm $t\mapsto \trace(|t|)$. We shall prove the converse:

\begin{theorem}[cf. {\cite[Theorem 3.18]{SvS}}]\label{thm:SvS}
Let $H$ be a separable Hilbert space, let $A$ be a commutative $C^*$-algebra, and let $t\in \Adjointable_A(C_0(\widehat{A},H))$ be an adjointable operator. We have $t\in\TC_A(C_0(\widehat{A},H))$ if and only if the function $x\mapsto t_x$ lies in  $C_0(\widehat{A},\TC(H))$.
\end{theorem}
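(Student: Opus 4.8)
The plan is to deduce both implications from Theorem \ref{thm:tc-pointwise} applied to the positive operator $|t|$, reserving the genuinely new work for the continuity half of the ``only if'' direction. Throughout I write $(\epsilon_i)$ for the orthonormal basis of $H$ underlying the frame of multipliers $(\widetilde{\epsilon_i})$ of $C_0(\widehat A,H)$ (Example \ref{example:free-module-frame}), $P_N$ for the orthogonal projection of $H$ onto $\lspan\{\epsilon_1,\dots,\epsilon_N\}$, and $\|\cdot\|_1,\|\cdot\|_2$ for the trace and Hilbert--Schmidt norms on operators on $H$. Since localisation is a $*$-functor compatible with the continuous functional calculus, $|t|_x=|t_x|$ for every $x$, and the frame of multipliers localises to the frame $(\epsilon_i)$ of $F_x=H$; hence $\trace(|t|)(x)$ and $\trace(|t_x|)=\|t_x\|_1$ agree whenever either is defined.

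First I would dispatch the ``if'' direction. Suppose $x\mapsto t_x$ lies in $C_0(\widehat A,\TC(H))$. Then each $|t_x|=|t|_x$ is trace class, and the reverse triangle inequality $\bigl|\,\|t_x\|_1-\|t_y\|_1\,\bigr|\le\|t_x-t_y\|_1$, together with $\|t_x\|_1\to 0$ at infinity, shows that $x\mapsto\trace(|t|_x)=\|t_x\|_1$ is continuous and vanishes at infinity, i.e.\ it lies in $A=C_0(\widehat A)$. Thus the positive operator $|t|$ satisfies condition (c) of Theorem \ref{thm:tc-pointwise}, so by (c)$\Rightarrow$(a) the trace $\trace(|t|)$ exists in $A$; that is, $t\in\TC_A(C_0(\widehat A,H))$. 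Notably this argument never needs continuity of the map $s\mapsto|s|$ on $\TC(H)$, only continuity of the scalar function $x\mapsto\|t_x\|_1$.

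For the ``only if'' direction, assume $t\in\TC_A(C_0(\widehat A,H))$. Applying Theorem \ref{thm:tc-pointwise} to $|t|$ via (a)$\Rightarrow$(c) gives $t_x\in\TC(H)$ for every $x$ and $\|t_x\|_1=\trace(|t|)(x)\in C_0(\widehat A)$; in particular $\|t_x\|_1\to 0$ at infinity, which is exactly vanishing at infinity in the trace norm. The remaining, and in my view the only substantive, point is continuity of $x\mapsto t_x$ in the trace norm, which I would obtain by approximating $t_x$ uniformly by its finite compressions $P_Nt_xP_N$. The norm convergence of $\trace(|t|)=\sum_i\langle\widetilde{\epsilon_i}\,|\,|t|\,\widetilde{\epsilon_i}\rangle$ in $A$ means precisely that $R_N\coloneqq\sup_x\trace\bigl((1-P_N)|t_x|\bigr)\to 0$ as $N\to\infty$. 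Since $\trace(|t^*|)(x)=\|t_x^*\|_1=\|t_x\|_1=\trace(|t|)(x)$, the adjoint $t^*$ is also trace class, and the analogous quantity $R_N'\coloneqq\sup_x\trace\bigl((1-P_N)|t_x^*|\bigr)$ likewise tends to $0$.

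The tail estimate is then a Hilbert--Schmidt factorisation: for positive trace-class $s$ one has $\|(1-P_N)s\|_1\le\|(1-P_N)s^{1/2}\|_2\,\|s^{1/2}\|_2=\sqrt{\trace((1-P_N)s)}\,\sqrt{\trace(s)}$. Applying this with $s=|t_x|$ and $s=|t_x^*|$, and using the polar decompositions $t_x=u_x|t_x|$ and $t_x^*=w_x|t_x^*|$ to strip off the partial isometries, yields $\|t_x(1-P_N)\|_1\le\sqrt{R_N\,C}$ and $\|(1-P_N)t_x\|_1\le\sqrt{R_N'\,C}$, where $C\coloneqq\sup_x\|t_x\|_1=\|\trace(|t|)\|_A<\infty$. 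Hence $\|t_x-P_Nt_xP_N\|_1\le\|(1-P_N)t_x\|_1+\|t_x(1-P_N)\|_1\to 0$ uniformly in $x$. Finally each compression $x\mapsto P_Nt_xP_N$ is continuous in the trace norm: it is a rank-$\le N$ matrix whose entries $x\mapsto\langle\epsilon_j\,|\,t_x\epsilon_i\rangle=\langle\widetilde{\epsilon_j}\,|\,t\,\widetilde{\epsilon_i}\rangle(x)$ are localisations of multipliers of $A$, hence continuous bounded functions, and on a fixed finite-rank space the trace norm is dominated by $N$ times the operator norm. Thus $x\mapsto t_x$ is a uniform trace-norm limit of trace-norm-continuous functions, hence continuous, so $x\mapsto t_x\in C_0(\widehat A,\TC(H))$. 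The main obstacle is precisely this last continuity statement: everything pointwise, including continuity of the scalar trace $x\mapsto\|t_x\|_1$, is immediate from Theorem \ref{thm:tc-pointwise}, but upgrading to continuity in the much stronger trace norm is where the $C_0$-convergence of the trace series---rather than its mere pointwise convergence---does the essential work.
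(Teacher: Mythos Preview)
Your proof is correct, and it takes a genuinely different route from the paper's. Both proofs handle the ``if'' direction the same way, via Theorem~\ref{thm:tc-pointwise}(c). For the ``only if'' direction (the substantive one), the paper factors through the Haagerup tensor product: Lemma~\ref{lem:image-phi-1} uses the weak polar decomposition $t=rs$ in $\Adjointable_A(F)$ together with the frame expansion to exhibit each trace-class $t$ as $\phi_F(u)$ for some $u\in F^*\otimes^{\h}F$, and then Lemma~\ref{lem:image-phi-2} observes that $\phi_F(u)$ automatically lands in $C_0(\widehat{A},\TC(H))$ because the localisation map $\psi$ into $C_0(\widehat{A},H^*\otimes^{\h}H)\cong C_0(\widehat{A},\TC(H))$ is contractive. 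You instead give a direct analytic argument: the norm convergence of the trace series in $A$ (applied to both $|t|$ and $|t^*|$) provides uniform control of the Schatten tails, and a Hilbert--Schmidt factorisation converts this into uniform trace-norm approximation of $t_x$ by the finite compressions $P_N t_x P_N$, whose continuity is elementary. Your approach is more self-contained and requires no operator-space machinery, which is a real advantage for a reader interested only in this statement. The paper's approach, on the other hand, is chosen because the intermediate result $\TC_A(F)\subseteq\image\phi_F$ is itself of interest---it is a step toward the paper's main Theorem~\ref{thm:ci}, identifying $\TC_A(F)$ with $F^*\otimes^{\h}_A F$---so the Haagerup-tensor-product detour is infrastructure rather than overhead.
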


If $F$ is a Hilbert module over a $C^*$-algebra $A$ then we identify $F$ with $\Compact_A(A,F)$ via the map $\eta\mapsto \ket{\eta}$, as explained in Section \ref{sec:preliminaries}. We then let $F^*$ denote the operator-theoretic adjoint of this set of operators: $F^* \coloneqq \Compact_A(F,A) = \{ \bra{\xi}\ |\ \xi\in F\}$. We emphasise that in this paper ``$*$'' will always denote the adjoint of an operator or set of operators, and never the dual space.

\begin{definition}\label{def:Haagerup-norm}
The \emph{Haagerup norm} on the algebraic tensor product $F^*\otimes F$ is defined by
\[
\| u\|_{\h} = \inf\left\{ \big\|\textstyle\sum_{i=1}^k \langle \xi_i\, |\, \xi_i\rangle\big\|^{1/2}  \big\| \textstyle \sum_{i=1}^k \langle \eta_i\, |\, \eta_i\rangle \big\|^{1/2} \right\}
\]
where the infimum is taken over all the ways of writing  $u$ as a sum of elementary tensors $u = \sum_{i=1}^k \bra{\xi_i}\otimes \ket{\eta_i}$. The \emph{Haagerup tensor product} $F^*\otimes^{\h} F$ is the completion of the algebraic tensor product in this norm.
\end{definition}

See \cite[Chapter 9]{ER} or \cite[Chapter 5]{Pisier} for background on the Haagerup tensor product.

\begin{lemma}\label{lem:phiF}
The map $\phi_F:F^*\otimes F \to \Compact_A(F)$ defined by $\phi_F(\bra{\xi} \otimes \ket{\eta})=\ket{\eta}\bra{\xi}$ is a contraction for the Haagerup norm.
\end{lemma}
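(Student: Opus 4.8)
The plan is to show that $\phi_F$ is contractive by directly estimating the operator norm of $\phi_F(u)$ in terms of the Haagerup norm of $u$. Given any representation $u = \sum_{i=1}^k \bra{\xi_i}\otimes\ket{\eta_i}$, the image is $\phi_F(u) = \sum_{i=1}^k \ket{\eta_i}\bra{\xi_i}$, and I want to bound $\|\phi_F(u)\|_{\Compact_A(F)}$ by the product $\big\|\sum_i \langle\xi_i\,|\,\xi_i\rangle\big\|^{1/2}\big\|\sum_i\langle\eta_i\,|\,\eta_i\rangle\big\|^{1/2}$. Taking the infimum over all such representations then yields $\|\phi_F(u)\| \leq \|u\|_{\h}$, which is exactly the contraction statement.

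First I would recognise the operator $\phi_F(u) = \sum_i \ket{\eta_i}\bra{\xi_i}$ as a product of two operators built row-wise and column-wise from the $\xi_i$ and $\eta_i$. Concretely, let $X = (\bra{\xi_1},\dots,\bra{\xi_k})^{\transpose}$ be the column operator $F\to A^k$ with $i$th component $\bra{\xi_i}$, and let $Y = (\ket{\eta_1},\dots,\ket{\eta_k})$ be the row operator $A^k\to F$ with $i$th component $\ket{\eta_i}$. Then $\phi_F(u) = Y X$, so that $\|\phi_F(u)\| \leq \|Y\|\,\|X\|$. The point is now to identify these two operator norms: by the standard formula for the norm of a column of adjointable operators, $\|X\|^2 = \|X^* X\| = \big\|\sum_i \bra{\xi_i}^*\bra{\xi_i}\big\| = \big\|\sum_i \ket{\xi_i}\bra{\xi_i}\big\|$, and likewise $\|Y\|^2 = \|YY^*\| = \big\|\sum_i \ket{\eta_i}\bra{\eta_i}\big\|$. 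It remains only to connect $\big\|\sum_i\ket{\xi_i}\bra{\xi_i}\big\|$ with $\big\|\sum_i\langle\xi_i\,|\,\xi_i\rangle\big\|$.

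That last connection is the one genuinely substantive step, and I expect it to be the main obstacle. One clean way to handle it is via localisation: for each $x\in\widehat{A}$ we have $\big(\sum_i\ket{\xi_i}\bra{\xi_i}\big)_x = \sum_i \ket{(\xi_i)_x}\bra{(\xi_i)_x}$ as a positive operator on the Hilbert space $F_x$, whose operator norm is $\sup_{\|v\|=1}\sum_i |\langle (\xi_i)_x\,|\,v\rangle|^2$, and by Cauchy--Schwarz together with the rank-one structure this equals the norm of the vector-valued quantity governed by $\sum_i \langle(\xi_i)_x\,|\,(\xi_i)_x\rangle = \big(\sum_i\langle\xi_i\,|\,\xi_i\rangle\big)(x)$. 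Passing through localisation via the identity $\|s\|_{\Adjointable_A} = \sup_x\|s_x\|$ then gives $\big\|\sum_i\ket{\xi_i}\bra{\xi_i}\big\| = \big\|\sum_i\langle\xi_i\,|\,\xi_i\rangle\big\|$, and similarly for the $\eta_i$. (Alternatively, one can observe directly that $\sum_i\ket{\xi_i}\bra{\xi_i}$ and the diagonal matrix with entries $\langle\xi_i\,|\,\xi_i\rangle$ have the same nonzero spectrum, being of the form $ZZ^*$ and $Z^*Z$ for the column $Z=(\ket{\xi_1},\dots,\ket{\xi_k})$ viewed appropriately, so their norms agree.)

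Combining these pieces gives
\[
\|\phi_F(u)\| \;\leq\; \|Y\|\,\|X\| \;=\; \Big\|\textstyle\sum_i\langle\eta_i\,|\,\eta_i\rangle\Big\|^{1/2}\Big\|\textstyle\sum_i\langle\xi_i\,|\,\xi_i\rangle\Big\|^{1/2},
\]
and taking the infimum over representations of $u$ yields $\|\phi_F(u)\|\leq\|u\|_{\h}$. Since this holds on the algebraic tensor product and $\Compact_A(F)$ is complete, $\phi_F$ extends to a contraction on $F^*\otimes^{\h}F$; but for the statement as given we need only the contractivity on the algebraic level, so no further extension argument is required. I would keep the matricial bookkeeping light and lean on the row/column norm identities, since those are exactly the operator-module analogues of the Hilbert-space estimate behind the classical map $\phi_H$.
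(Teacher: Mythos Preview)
Your approach is essentially the same as the paper's: factor $\phi_F(u)=YX$ through $A^k$, compute $\|X\|^2=\|X^*X\|=\big\|\sum_i\ket{\xi_i}\bra{\xi_i}\big\|$ and $\|Y\|^2=\|YY^*\|=\big\|\sum_i\ket{\eta_i}\bra{\eta_i}\big\|$, and then compare with $\big\|\sum_i\langle\xi_i\,|\,\xi_i\rangle\big\|$ via localisation. The structure is right and the conclusion follows.

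There is, however, a genuine slip in the ``substantive step'': you assert that $\big\|\sum_i\ket{\xi_i}\bra{\xi_i}\big\|$ \emph{equals} $\big\|\sum_i\langle\xi_i\,|\,\xi_i\rangle\big\|$, and both of your justifications for this are incorrect. For the localisation argument, at a point $x$ the operator norm of $\sum_i\ket{(\xi_i)_x}\bra{(\xi_i)_x}$ is generally strictly smaller than $\sum_i\|(\xi_i)_x\|^2$: take the $(\xi_i)_x$ to be orthonormal in $F_x$, so the sum is a rank-$k$ projection with norm $1$ while $\sum_i\|(\xi_i)_x\|^2=k$. For the $ZZ^*$/$Z^*Z$ argument, if $Z=\begin{bmatrix}\ket{\xi_1}&\cdots&\ket{\xi_k}\end{bmatrix}:A^k\to F$ then $Z^*Z$ is the Gram matrix $\big(\langle\xi_i\,|\,\xi_j\rangle\big)_{i,j}$, not the diagonal matrix; its norm equals $\|ZZ^*\|$, not $\big\|\sum_i\langle\xi_i\,|\,\xi_i\rangle\big\|$.

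Fortunately you only need the inequality $\big\|\sum_i\ket{\xi_i}\bra{\xi_i}\big\|\leq\big\|\sum_i\langle\xi_i\,|\,\xi_i\rangle\big\|$, and that does follow from your localisation set-up: at each $x$ the operator norm of the positive finite-rank operator $\sum_i\ket{(\xi_i)_x}\bra{(\xi_i)_x}$ is bounded by its trace $\sum_i\|(\xi_i)_x\|^2=\big(\sum_i\langle\xi_i\,|\,\xi_i\rangle\big)(x)$. This is exactly how the paper argues it. Replace ``equals'' by ``is at most'' and drop the spectral alternative, and your proof coincides with the paper's.
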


\begin{proof}
For all $\xi_1,\eta_1,\ldots,\xi_k,\eta_k\in F$ we have 
\(
\phi_F\left(\textstyle\sum_{i=1}^k \bra{\xi_i}\otimes \ket{\eta_i}\right) = \eta \xi,
\)
where
\[
\eta = \begin{bmatrix}  \ket{\eta_1} & \cdots & \ket{\eta_k} \end{bmatrix} \in \Compact_A(A^k, F) \quad \text{and}\quad \xi = \begin{bmatrix} \bra{\xi_1} \\ \vdots \\ \bra{\xi_k}\end{bmatrix} \in \Compact_A(F,A^k).
\]
Now, using the fact that the trace dominates the operator norm on positive Hilbert-space operators, we estimate
\begin{multline*}
\|\eta\|_{\Compact_A(A^k,F)}^2  = \| \eta \eta^*\|_{\Compact_A(F)}  = \big\| \textstyle\sum_i \ket{\eta_i}\bra{\eta_i} \big\|  = \sup_{x\in \widehat{A}} \big\| \sum_i \ket{\eta_{i,x}}\bra{\eta_{i,x}} \big\| \\
 \leq \textstyle \sup_{x\in \widehat{A}} \sum_i \langle \eta_{i,x}\, |\, \eta_{i,x} \rangle  =\left\|\sum_i \langle \eta_i\, |\, \eta_i\rangle\right\|_A.
\end{multline*}
A similar argument shows that $\|\xi\|^2_{\Compact_A(F,A^k)} \leq \| \sum_i \langle \xi_i\, |\, \xi_i\rangle \|_A$, and now the inequality of operator norms $\| \eta\xi\|\leq \|\eta\| \|\xi\|$ ensures that 
\[
\left\|\phi_F\left(\textstyle\sum_i \bra{\xi_i}\otimes \ket{\eta_i}\right)\right\|_{\Compact_A(F)} = \|\eta\xi\|_{\Compact_A(F)} \leq \big\|\textstyle\sum_i\langle \xi_i\, |\, \xi_i\rangle \big\|_A^{1/2} \big\| \textstyle\sum_i \langle \eta_i\, |\, \eta_i\rangle \big\|_A^{1/2}.
\]
Taking the infimum over $\xi$ and $\eta$  shows that $\phi_F$ is contractive.
\end{proof}

The map $\phi_F$ thus extends to a contraction $F^*\otimes^{\h} F\to \Compact_A(F)$, which we continue to denote by $\phi_F$.

We now focus for a moment on the special case $A=\C$. Classical operator theory (see, e.g. \cite[\S 18]{Conway} or \cite[Section 2.4]{Murphy}) tells us that if $H$ is a separable Hilbert space, then $\TC(H)$ is a linear subspace of $\Compact(H)$, and a Banach space under the norm $t\mapsto \trace(|t|)$.

\begin{lemma}\label{lem:phiH}
Let $H$ be a separable Hilbert space, regarded as a Hilbert module over $\C$. The map $\phi_H$ gives an isometric isomorphism $H^*\otimes^{\h} H \xrightarrow{\cong}\TC(H)$.
\end{lemma}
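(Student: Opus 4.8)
The plan is to prove that $\phi_H$ is both isometric and surjective onto $\TC(H)$, so that it extends to an isometric isomorphism from the Haagerup completion. I already know from Lemma \ref{lem:phiF} (applied with $A=\C$) that $\phi_H$ is a contraction for the Haagerup norm, landing in $\Compact(H)$; the work is to show it lands in $\TC(H)$, that it is norm-nonincreasing in the \emph{other} direction as well, and that every trace-class operator is hit.

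First I would check that elementary tensors land in $\TC(H)$ and compute the trace norm. For $u=\sum_i \bra{\xi_i}\otimes\ket{\eta_i}$ we have $\phi_H(u)=\eta\xi$ with $\eta,\xi$ the row/column operators from the proof of Lemma \ref{lem:phiF}; since $\eta$ and $\xi$ are finite-rank, $\eta\xi$ is trace class, so $\phi_H$ maps into $\TC(H)$. To bound the trace norm, I would use the factorisation $\|\eta\xi\|_{\TC}\le\|\eta\|_{\HS}\,\|\xi\|_{\HS}$ (the trace norm of a product is at most the product of Hilbert--Schmidt norms), together with the computations $\|\eta\|_{\HS}^2=\sum_i\langle\eta_i\,|\,\eta_i\rangle=\|\sum_i\langle\eta_i\,|\,\eta_i\rangle\|$ and likewise for $\xi$, which hold because over $\C$ the inner products are scalars. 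Taking the infimum over representations of $u$ then gives $\|\phi_H(u)\|_{\TC}\le\|u\|_{\h}$, so $\phi_H$ is a contraction into $\TC(H)$.

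For the reverse inequality and surjectivity, I would start from an arbitrary $t\in\TC(H)$ and produce a representation whose Haagerup norm is controlled by $\|t\|_{\TC}$. The natural move is the singular-value (Schmidt) decomposition $t=\sum_i s_i\ket{\eta_i}\bra{\zeta_i}$ with $(\eta_i)$, $(\zeta_i)$ orthonormal and $s_i\ge0$, $\sum_i s_i=\|t\|_{\TC}$. Writing $\xi_i=\sqrt{s_i}\,\zeta_i$ and $\eta_i'=\sqrt{s_i}\,\eta_i$, this exhibits $t=\phi_H\big(\sum_i\bra{\xi_i}\otimes\ket{\eta_i'}\big)$, showing surjectivity; and since $\sum_i\langle\xi_i\,|\,\xi_i\rangle=\sum_i\langle\eta_i'\,|\,\eta_i'\rangle=\sum_i s_i=\|t\|_{\TC}$, this representation certifies $\|u\|_{\h}\le\|t\|_{\TC}=\|\phi_H(u)\|_{\TC}$. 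Combined with the contraction estimate, $\phi_H$ is isometric on the algebraic tensor product, hence extends to an isometric isomorphism of completions onto the closure of its image; since its image already contains all finite-rank operators, which are dense in $\TC(H)$, the extension is onto all of $\TC(H)$.

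The main obstacle, and the step deserving the most care, is the handling of convergence for the infinite Schmidt decomposition: the representation $u=\sum_i\bra{\xi_i}\otimes\ket{\eta_i'}$ involves infinitely many terms and so defines an element of the \emph{completed} Haagerup tensor product rather than the algebraic one. I would address this either by truncating to finite rank and passing to the limit (using that finite-rank operators are trace-norm dense and that $\phi_H$ is a $\|\cdot\|_{\TC}$-contraction, so it suffices to establish the isometry on finite rank and extend by continuity), or by invoking the standard description of elements of the completed Haagerup tensor product of column and row Hilbert spaces as square-summable families. The cleanest route is to prove the isometry on finite-rank operators first and then extend by density on both sides, which keeps all sums finite.
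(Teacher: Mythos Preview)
Your proposal is correct and follows essentially the same route as the paper: both use the H\"older/Hilbert--Schmidt factorisation $\|\eta\xi\|_{\TC}\le\|\eta\|_{\HS}\|\xi\|_{\HS}$ for the contraction, then a square-root factorisation of a finite-rank $t$ (the paper via polar decomposition $t=v|t|^{1/2}\cdot|t|^{1/2}$, you via the SVD, which amount to the same thing) to exhibit a representation achieving the trace norm, and finally extend by density of finite-rank operators. One small point to tighten: when you write ``this representation certifies $\|u\|_{\h}\le\|\phi_H(u)\|_{\TC}$'' you have actually constructed a \emph{new} preimage of $t=\phi_H(u)$, and to conclude it equals the original $u$ you need the injectivity of $\phi_H$ on the algebraic tensor product---the paper invokes this explicitly (``a well-known fact from linear algebra''), and you should too.
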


This result---and indeed, the stronger assertion that $\phi_H$ is a \emph{completely} isometric isomorphism---is well known in operator-space theory; see \cite[p.275]{Blecher-Paulsen-tensor} and \cite[Corollary 4.4(c)]{Effros-Ruan-self-duality}. We shall give a simple direct proof here, both to keep this part of our presentation  self-contained, and to illuminate some of the later arguments.

\begin{proof}
We begin by noting, as in the proof of Lemma \ref{lem:phiF}, that for $u=\sum_{i=1}^k \bra{\xi_i}\otimes \ket{\eta_i} \in H^*\otimes H$ we have $\phi_H(u) = \eta \xi$ where $\eta = \sum_{i=1}^k \ket{\eta_i}\bra{\epsilon_i}\in \Compact(\C^k,H)$ and $\xi = \sum_{i=1}^k \ket{\epsilon_i}\bra{\xi_i}\in \Compact(H,\C^k)$, with $(\epsilon_i)$ denoting the standard orthonormal basis for $\C^k$. The H\"older inequality gives
\[
\begin{aligned}
\trace(|\phi_H(u)|) = \trace(|\eta\xi|) & \leq \trace(\eta^*\eta)^{1/2}\trace(\xi^*\xi)^{1/2} \\
& =  \big(\textstyle \sum_{i=1}^k \langle \eta_i\, |\, \eta_i\rangle \big)^{1/2} \big(\textstyle\sum_{i=1}^k \langle \xi_i\, |\, \xi_i\rangle\big)^{1/2}.
\end{aligned}
\]
Taking the infimum over all choices of $\xi_i$ and $\eta_i$ shows that $\trace(|\phi_H(u)|)\leq \|u\|_{\h}$, and so 
the map $\phi_H$ is a contraction for the trace norm on $\TC(H)$. 

To see that $\phi_H$ is an isometry, take $u\in H^*\otimes H$ and let $t\coloneqq \phi_H(u)$, a finite-rank operator on $H$. Using the polar decomposition $t=v|t|$ we write $r=v|t|^{1/2}$ and $s=|t|^{1/2}$. Both $r$ and $s$ have finite rank, so we can find a finite orthonormal set $\{\epsilon_1,\ldots, \epsilon_k\}\subset H$ such that $r=\sum_{i=1}^k \ket{r\epsilon_i}\bra{\epsilon_i}$ and $s=s^*=\sum_{i=1}^k \ket{\epsilon_i}\bra{s\epsilon_i}$. We have
\[
\phi_H(u) = t = rs = \textstyle\sum_{i=1}^k \ket{r\epsilon_i}\bra{s\epsilon_i} = \phi_H\left( \textstyle \sum_{i=1}^k \bra{s\epsilon_i}\otimes \ket{r\epsilon_i}\right).
\]
The map $\phi_H$ is injective on the algebraic tensor product---indeed, $\phi_H$ restricts to an isomorphism between $H^*\otimes H$ and the space of finite-rank operators on $H$---so $u= \sum_{i=1}^k \bra{s\epsilon_i}\otimes \ket{r\epsilon_i}$. We therefore have
\[
\begin{aligned}
\|u\|_{\h} &\leq \left(\textstyle \sum_{i=1}^k \langle s\epsilon_i\, |\, s\epsilon_i\rangle \right)^{1/2}  \left( \textstyle \sum_{i=1}^k \langle r\epsilon_i\, |\, r\epsilon_i\rangle \right)^{1/2} \\
& = \trace(s^*s)^{1/2}\trace(r^*r)^{1/2} = \trace(|\phi_H(u)|),
\end{aligned}
\]
showing that $\phi_H$ is isometric for the trace norm on $\TC(H)$. The image of $\phi_H$ is obviously dense in $\TC(H)$, as it contains all of the finite-rank operators, and so $\phi_H$ is an isometric isomorphism.
\end{proof}

We now return to the case of a general commutative $C^*$-algebra $A$. 

\begin{lemma}\label{lem:image-phi-1}
If $H$ is a separable Hilbert space, then the image of $\phi_{C_0(\widehat{A},H)}$ contains $\TC_A(C_0(\widehat{A},H))$.
\end{lemma}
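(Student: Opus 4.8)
The plan is to adapt the proof of Lemma \ref{lem:phiH} to the module setting, replacing the finite orthonormal set used there by a frame of multipliers $(\beta_i)$ for $F=C_0(\widehat{A},H)$ (one exists by Theorem \ref{thm:frame-properties}, or explicitly by Example \ref{example:free-module-frame}). Fix $t\in\TC_A(F)$. As noted just before Theorem \ref{thm:SvS}, Theorem \ref{thm:tc-pointwise} guarantees that the localised family $x\mapsto t_x$ lies in $C_0(\widehat{A},\TC(H))$; being trace-norm continuous and vanishing, it is in particular operator-norm continuous and vanishing, so via the identification $\Compact_A(F)\cong C_0(\widehat{A},\Compact(H))$ we have $t\in\Compact_A(F)$, and hence $|t|$ and $s:=|t|^{1/2}$ also lie in $\Compact_A(F)$. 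Writing $v_x$ for the partial isometry in the polar decomposition $t_x=v_x|t_x|$, I set $r:=\big(x\mapsto v_x|t_x|^{1/2}\big)$, so that pointwise $t_x=r_xs_x$ and $r_x^*r_x=|t_x|^{1/2}v_x^*v_x|t_x|^{1/2}=|t_x|$. Granting $r\in\Compact_A(F)$, faithfulness and multiplicativity of localisation promote these to the global identities $t=rs$ and $r^*r=|t|=s^*s$, and the goal becomes to show that
\[
u:=\sum_{i=1}^\infty \bra{s\beta_i}\otimes\ket{r\beta_i}
\]
converges in $F^*\otimes^{\h}F$ and satisfies $\phi_F(u)=t$.

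The main obstacle is that the pointwise polar part $v=(x\mapsto v_x)$ is in general \emph{not} an operator on $F$: the assignment $x\mapsto v_x$ need not be norm-continuous (take $H=\C$ and $t_x=x$, where $v_x=\operatorname{sign}(x)$), and $v$ need not even lie in $\Adjointable_A(F)$. What rescues the argument is that the factor $|t_x|^{1/2}$ damps this discontinuity: the map $\Compact(H)\to\Compact(H)$, $a\mapsto v_a|a|^{1/2}$, is norm-continuous, as one sees from the integral representation
\[
v_a|a|^{1/2}=\frac{1}{\pi}\int_0^\infty \lambda^{-1/2}\,a\,(|a|+\lambda)^{-1}\,\dd\lambda,
\]
whose integrand is norm-continuous in $a$ and, since $\|a(|a|+\lambda)^{-1}\|\leq\min(1,\|a\|/\lambda)$, is dominated by a $\lambda$-integrable bound that is locally uniform in $a$. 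Composing with the norm-continuous vanishing family $x\mapsto t_x$, and using $\|r_x\|=\|t_x\|^{1/2}\to0$ at infinity, shows $r\in C_0(\widehat{A},\Compact(H))=\Compact_A(F)$. Consequently $s\beta_i$ and $r\beta_i$ are genuine elements of $F$, and each $\bra{s\beta_i}\otimes\ket{r\beta_i}$ is a legitimate elementary tensor.

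It remains to verify convergence of $u$ and the identity $\phi_F(u)=t$. Using $\langle s\beta_i\,|\,s\beta_i\rangle=\langle\beta_i\,|\,|t|\beta_i\rangle=\langle r\beta_i\,|\,r\beta_i\rangle$ (from $s^*s=|t|=r^*r$), the definition of the Haagerup norm bounds the tail sums by
\[
\Big\|\sum_{i=m}^n\bra{s\beta_i}\otimes\ket{r\beta_i}\Big\|_{\h}\leq \Big\|\sum_{i=m}^n\langle s\beta_i\,|\,s\beta_i\rangle\Big\|^{1/2}\Big\|\sum_{i=m}^n\langle r\beta_i\,|\,r\beta_i\rangle\Big\|^{1/2}=\Big\|\sum_{i=m}^n\langle\beta_i\,|\,|t|\beta_i\rangle\Big\|,
\]
and the right-hand side tends to $0$ because $\sum_i\langle\beta_i\,|\,|t|\beta_i\rangle$ converges in $A$ to $\trace(|t|)$ (finite since $t\in\TC_A(F)$). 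Hence the partial sums of $u$ are Cauchy and $u\in F^*\otimes^{\h}F$. Finally, $\phi_F$ is continuous (Lemma \ref{lem:phiF}) and $r$ is bounded, so for each $\eta\in F$ the partial sums satisfy $\sum_{i\leq N}\ket{r\beta_i}\bra{s\beta_i}\eta=r\big(\sum_{i\leq N}\ket{\beta_i}\langle\beta_i\,|\,s\eta\rangle\big)\to r(s\eta)=t\eta$, by the frame reconstruction formula of Theorem \ref{thm:frame-properties} applied to $s\eta\in F$. Comparing this strong limit with the norm limit $\phi_F\big(\sum_{i\leq N}\cdots\big)\to\phi_F(u)$ forces $\phi_F(u)=t$, so $t\in\image\phi_F$ and the claimed containment follows.
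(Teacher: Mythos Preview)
Your proof is correct and follows essentially the same strategy as the paper's: factor $t=rs$ with $s=|t|^{1/2}$ and $r^*r=|t|$, then exhibit $t$ as $\phi_F(u)$ for the Haagerup-convergent series $u=\sum_i\bra{s\beta_i}\otimes\ket{r\beta_i}$ built from a frame of multipliers. The one substantive difference is in how $r$ is obtained. You construct $r$ pointwise as $r_x=v_x|t_x|^{1/2}$ and then must verify that $x\mapsto r_x$ is norm-continuous and vanishing, which you do correctly via the integral representation $v_a|a|^{1/2}=\pi^{-1}\int_0^\infty\lambda^{-1/2}a(|a|+\lambda)^{-1}\,\dd\lambda$ and dominated convergence. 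The paper instead invokes the weak polar decomposition directly in the $C^*$-algebra $\Adjointable_A(F)$ (Pedersen~1.4.5), producing $r\in\Adjointable_A(F)$ with $t=r|t|^{1/2}$ and $r^*r=|t|$ in one stroke; it then argues separately that each $r\widetilde{\epsilon_i}$ lies in $F$ (not merely $\Multiplier(F)$) from the fact that $\trace(r^*r)\in A$. Your route is more explicit and yields slightly more ($r\in\Compact_A(F)$), at the cost of the integral-formula interlude; the paper's is shorter but leans on an external reference. The paper also uses the specific orthonormal frame $(\widetilde{\epsilon_i})$, whose orthogonality $\langle\widetilde{\epsilon_i}\,|\,\widetilde{\epsilon_j}\rangle=\delta_{ij}$ lets it identify $\sum_i\ket{r\widetilde{\epsilon_i}}\bra{s\widetilde{\epsilon_i}}$ with $rs$ directly, whereas you work with an arbitrary frame and verify $\phi_F(u)=t$ via the frame reconstruction formula and comparison of strong and norm limits.
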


The proof uses a modification of the polar-decomposition technique from our proof of Lemma \ref{lem:phiH}.

\begin{proof}
To save space we will write $F\coloneqq C_0(\widehat{A},H)$. Let $t\in \TC_A(F)$ be a trace-class operator. Using the weak polar decomposition in the $C^*$-algebra $\Adjointable_A(F)$ we write $t=rs$, where $s=|t|^{1/2}\in \Adjointable_A(F)$, and $r\in \Adjointable_A(F)$ satisfies $r^*r=|t|$. (See \cite[1.4.5]{Pedersen}, where we set $x=t$, $a=t^*t$, and $\alpha=\frac{1}{4}$.)  Since $t\in \TC_A(F)$, both $r^*r$ and $ss^*$ lie in $\TC_A(F)$. 

Let $(\epsilon_i)$ be an orthonormal basis for $H$, and let $(\widetilde{\epsilon_i})$ be the corresponding frame of multipliers for $F$ (see Example \ref{example:free-module-frame}). For each $i$ we define $\eta_i\coloneqq r \widetilde{\epsilon_i}$. A priori $\eta_i$ lies in  $\Multiplier(F)$, but we will now show that in fact $\eta_i\in F$. Since $r^*r\in \TC_A(F)$, the series $\trace(r^*r)=\sum_i \langle \eta_i\, |\, \eta_i\rangle$ converges to an element of $A$. This is a series of positive bounded functions on $\widehat{A}$, and its sum vanishes at infinity, so the same must be true of each of the summands: that is, $\langle \eta_i\, |\, \eta_i\rangle\in A$ for each $i$. Since this inner product is the operator $\eta_i^*\eta_i$, and since this operator lies in the ideal $A=\Compact_A(A)$ of $\Adjointable_A(A)$, we conclude that indeed $\eta_i\in \Compact_A(A,F)\cong F$.

We next show that the sum $\sum_{i} \ket{\eta_i}\bra{\widetilde{\epsilon_i}}$  converges to $r$ in the operator norm on $\Adjointable_A(F)$. To see that the series converges, note that $\langle \widetilde{\epsilon_i}\ |\ \widetilde{\epsilon_j}\rangle=\delta_{i,j}$, and so
\begin{equation}\label{eq:Cauchy}
\big\|\textstyle\sum_{i=n}^{n+m} \ket{\eta_i}\bra{\widetilde{\epsilon_i}}\big\|^2_{\Adjointable_A(F)} 
= \big\| \textstyle \sum_{i=n}^{n+m}\ket{\eta_i}\bra{\eta_i} \big\|_{\Adjointable_A(F)} 
\leq \big\| \textstyle \sum_{i=n}^{n+m} \langle \eta_i\, |\, \eta_i \rangle \big\|_A,
\end{equation}
where we once again used the fact that the trace dominates the operator norm for positive operators on each Hilbert space $F_x$. We observed above that the series
\(
\sum_{i} \langle \eta_i\, |\, \eta_i\rangle
\)
converges in norm in $A$, and so the estimate \eqref{eq:Cauchy} shows that $\sum_{i} \ket{\eta_i}\bra{\widetilde{\epsilon_i}}$ converges too. The last assertion in Theorem \ref{thm:frame-properties} implies that the strong-operator limit of this series is $r$, and so the norm limit is also $r$.

Since $ss^*\in \TC_A(F)$, the same argument as above shows that we can write $s$ as a norm-convergent series $\sum_{i} \ket{\widetilde{\epsilon_i}}\bra{\xi_i}$, where each $\xi_i$ lies in $F$, and where $\sum_{i}\langle \xi_i\, |\ \xi_i\rangle$ converges in norm in $A$. Now it is clear from the definition of the Haagerup norm that the series
\(
u = \sum_{i} \bra{\xi_i}\otimes \ket{\eta_i}
\)
converges in $F^*\otimes^{\h} F$, and the sum $u$ of this series satisfies
\(
\phi_F(u)=\sum_{i} \ket{\eta_i}\bra{\xi_i} = rs = t.
\)
Thus $t\in \image \phi_F$.
\end{proof}

\begin{lemma}\label{lem:image-phi-2}
Let $H$ be a separable Hilbert space. For each $u\in C_0(\widehat{A},H)^*\otimes^{\h} C_0(\widehat{A},H)$ the function $x\mapsto \phi_{C_0(\widehat{A},H)}(u)_x$ is contained in $C_0(\widehat{A},\TC(H))$. 
\end{lemma}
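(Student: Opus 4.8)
The plan is to localize at each point $x\in\widehat{A}$, reducing the pointwise statement to the Hilbert-space case of Lemma \ref{lem:phiH}, and then to bootstrap from finite sums to the whole Haagerup tensor product by a density argument. Write $F\coloneqq C_0(\widehat{A},H)$. First I would record that for each $x$ the localization $F_x$ is canonically isometrically isomorphic to $H$ via $\xi_x\mapsto \xi(x)$, and that the pointwise maps $\bra{\xi}\mapsto\bra{\xi_x}$ and $\ket{\eta}\mapsto\ket{\eta_x}$ assemble into a bilinear map $F^*\otimes F\to H^*\otimes H$, $u\mapsto u_x$.

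Next I would check that this localization map is a contraction for the Haagerup norms. For $u=\sum_i \bra{\xi_i}\otimes\ket{\eta_i}$, the definition of the Haagerup norm on $H^*\otimes H$ gives $\|u_x\|_{\h}\leq \|\sum_i\langle\xi_{i,x}|\xi_{i,x}\rangle\|^{1/2}\|\sum_i\langle\eta_{i,x}|\eta_{i,x}\rangle\|^{1/2}$, and since $\langle\xi_{i,x}|\xi_{i,x}\rangle=\langle\xi_i|\xi_i\rangle(x)$, while evaluation at $x$ never increases the norm of a positive element of $A$, we obtain $\|u_x\|_{\h}\leq\|u\|_{\h}$. Hence localization extends to a contraction $F^*\otimes^{\h}F\to H^*\otimes^{\h}H$. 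Moreover localization commutes with $\phi$, in the sense that $\phi_F(u)_x=\phi_H(u_x)$; this is immediate on elementary tensors and extends by continuity. Combined with the isometry assertion of Lemma \ref{lem:phiH}, this yields the key bound $\|\phi_F(u)_x\|_{\TC(H)}=\|u_x\|_{\h}\leq\|u\|_{\h}$, valid uniformly in $x$. In particular each $\phi_F(u)_x=\phi_H(u_x)$ lies in $\TC(H)$.

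It then remains to prove continuity and vanishing at infinity of $x\mapsto\phi_F(u)_x$. I would first treat a finite sum $u=\sum_{i=1}^k\bra{\xi_i}\otimes\ket{\eta_i}$, for which $\phi_F(u)_x=\sum_i\ket{\eta_i(x)}\bra{\xi_i(x)}$ is finite-rank. Writing the difference $\phi_F(u)_x-\phi_F(u)_y$ as $\sum_i\big(\ket{\eta_i(x)}\bra{\xi_i(x)-\xi_i(y)}+\ket{\eta_i(x)-\eta_i(y)}\bra{\xi_i(y)}\big)$ and using that a rank-one operator $\ket{a}\bra{b}$ has trace norm $\|a\|\,\|b\|$, continuity follows from the norm-continuity of the functions $\xi_i,\eta_i\in C_0(\widehat{A},H)$; the estimate $\|\phi_F(u)_x\|_{\TC}\leq\sum_i\|\eta_i(x)\|\,\|\xi_i(x)\|$ gives vanishing at infinity. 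Thus $x\mapsto\phi_F(u)_x$ lies in $C_0(\widehat{A},\TC(H))$ whenever $u$ is a finite sum.

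Finally, for general $u$ I would approximate by finite sums $u_n\to u$ in the Haagerup norm. The uniform bound from the second step gives $\sup_x\|\phi_F(u_n)_x-\phi_F(u)_x\|_{\TC}\leq\|u_n-u\|_{\h}\to 0$, so the functions $x\mapsto\phi_F(u_n)_x$ converge to $x\mapsto\phi_F(u)_x$ in the sup-norm of $C_b(\widehat{A},\TC(H))$; since $C_0(\widehat{A},\TC(H))$ is closed in $C_b(\widehat{A},\TC(H))$ and each approximant lies in it, so does the limit. The main obstacle is securing the clean uniform identity $\|\phi_F(u)_x\|_{\TC}=\|u_x\|_{\h}$, which is precisely what makes the density argument run: this rests on correctly identifying the localization of the Haagerup tensor product with $H^*\otimes^{\h}H$ and invoking the isometry of Lemma \ref{lem:phiH}. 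Once that is in place, the continuity and vanishing-at-infinity for finite sums are routine.
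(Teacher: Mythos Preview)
Your argument is correct and is essentially the paper's proof unpacked. The paper bundles your pointwise localisation maps $u\mapsto u_x$ into a single contraction $\psi:F^*\otimes^{\h}F\to C_0(\widehat{A},H^*\otimes^{\h}H)$, $\psi(u)(x)=u_x$, and then reads off the conclusion from the commutative square relating $\psi$, $\phi_F$, $\phi_H$ and localisation; your Steps~2 and~5--6 are exactly the ``straightforward check'' that $\psi$ is well defined and contractive into $C_0(\widehat{A},H^*\otimes^{\h}H)$, and your Step~3 is the commutativity of that square.
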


\begin{proof}
It is straightforward to check that the map
\[
\psi: C_0(\widehat{A},H)^*\otimes C_0(\widehat{A},H)\to C_0(\widehat{A}, H^*\otimes^{\h} H),\quad \psi(\bra{\xi}\otimes \ket{\eta})(x)=\bra{\xi(x)}\otimes \ket{\eta(x)}
\]
is contractive with respect to the Haagerup norm, and thus extends to the Haagerup tensor product. The diagram
\[
\xymatrix@C=50pt{
C_0(\widehat{A},H)^* \otimes^{\h} C_0(\widehat{A},H) \ar[r]^-{\phi_{C_0(\widehat{A},H)}} \ar[d]_-{\psi} & \Compact_A(C_0(\widehat{A},H)) \ar[d]^-{k\mapsto (x\mapsto k_x)} \\
C_0(\widehat{A}, H^*\otimes^{\h} H) \ar[r]^-{f\mapsto \phi_H\circ f} & C_0(\widehat{A}, \Compact(H))
}
\]
obviously commutes, and Lemma \ref{lem:phiH} ensures that the map $\phi_H$ is an isometry into $\TC(H)$. 
\end{proof}

\begin{proof}[Proof of Theorem \ref{thm:SvS}]
The characterisation of $\TC_A(C_0(\widehat{A},H))$ given in Theorem \ref{thm:tc-pointwise}(c) immediately gives the inclusion $C_0(\widehat{A},\TC(H))\subseteq \TC_A(C_0(\widehat{A},H))$. The reverse inclusion holds because 
\[
\TC_A(C_0(\widehat{A},H)) \underset{\text{Lemma \ref{lem:image-phi-1}}}{\subseteq} \image \phi_{C_0(\widehat{A},H)} \underset{\text{Lemma \ref{lem:image-phi-2}}}{\subseteq} C_0(\widehat{A},\TC(H)).\qedhere
\]
\end{proof}

\section{Operator-space structure on $\TC_A(F)$}\label{sec:oss}

Let $F$ be a Hilbert module, countably generated by multipliers, over a commutative $C^*$-algebra $A$. We are going to equip the space $\TC_A(F)$ of trace-class operators on $F$ with an  operator-space structure, by embedding $F$ into a free module; and prove that with this operator-space structure $\TC_A(F)$ is completely isometrically isomorphic to a Haagerup tensor product. 

In this section we assume that the reader is conversant with the basic facts about operator spaces, and we appeal to well-known results from that subject more freely than in the previous section; general references are \cite{Pisier}, \cite{ER},  \cite{BLM}, and \cite{Blecher-newapproach}.

Let us briefly recall that if $F$ is a right Hilbert module over a $C^*$-algebra $A$, then we regard $F$ as a right \emph{operator} module over $A$ by embedding  $F$ and $A$ into  the $C^*$-algebra 
\[
\Adjointable_A(F\oplus A) = \begin{bmatrix} \Adjointable_A(F) & \Adjointable_A(A,F) \\ \Adjointable_A(F,A) & \Multiplier(A) \end{bmatrix}
\]
via the maps $\xi\mapsto \left[\begin{smallmatrix}0 & \ket{\xi} \\ 0& 0\end{smallmatrix}\right]$ and $a\mapsto \left[\begin{smallmatrix} 0&0 \\ 0& a\end{smallmatrix}\right]$. 

If $A$ is commutative then $F$ is also a \emph{left} operator module over $A$:  indeed,  for each $a\in A$ and each $\xi\in F$ the map $\xi\mapsto a\xi\coloneqq \xi a$ is an adjointable operator on $F$, and the map $A\to \Adjointable_A(F)$ sending $a\in A$ to this operator $\xi\mapsto a\xi$ is a nondegenerate $*$-homomorphism, by means of which $F$ becomes a left operator $A$-module. Taking adjoints, $F^*$ is also an operator $A$-bimodule: explicitly, $\bra{\xi}a = a\bra{\xi} = \bra{\xi a^*}$ for all $\xi\in F$ and $a\in A$.

The definition of the Haagerup norm  on $F^*\otimes^{\h} F$ (Definition \ref{def:Haagerup-norm}) extends in a natural way to give an operator-space structure on $F^*\otimes^{\h} F$ (see eg \cite[Chapter 9]{ER} or \cite[Chapter 5]{Pisier}.)  The balanced Haagerup tensor product
\[
F^*\otimes^{\h}_A F \coloneqq (F^*\otimes^{\h} F)/\overline{\lspan}\{ \bra{\xi a^*}  \otimes \ket{\eta} - \bra{\xi}\otimes  \ket{\eta a} \ |\ \eta,\xi\in F,\ a\in A\}
\]
is then made into an operator space using the quotient operator-space structure. We noted above that $F^*$ and $F$ are operator $A$-bimodules, and so $F^*\otimes^{\h}_A F$ is also an operator $A$-bimodule. (See \cite[Section 3.4]{BLM} or \cite{Blecher-newapproach} for details about Haagerup tensor products of operator (bi)modules.) The map $\phi_F : F^*\otimes^{\h} F \to \Compact_A(F)$, $\bra{\xi}\otimes \ket{\eta}\mapsto \ket{\eta}\bra{\xi}$ of Lemma \ref{lem:phiF} vanishes on elements of the form $\bra{\xi a^*}\otimes \ket{\eta}-\bra{\xi}\otimes \ket{\eta a})$ , and thus descends to a contractive map $\phi_F:F^*\otimes^{\h}_A F \to \Compact_A(F)$.

We are going to  prove that if $F$ is countably generated by multipliers, then $\phi_F$ is a completely isometric isomorphism $F^*\otimes^{\h}_A F\xrightarrow{\cong} \TC_A(F)$. In order to make sense of this we first need to equip $\TC_A(F)$ with an operator-space structure. When $F=C_0(\widehat{A},H)$ for a separable Hilbert space $H$, there is an obvious way to do this: we have in this case $\TC_A(F)\cong C_0(\widehat{A},\TC(H))$ (Theorem \ref{thm:SvS}), and the right-hand side carries a canonical operator-space structure coming from the identifications $M_n(C_0(\widehat{A},\TC(H)))\cong C_0(\widehat{A}, M_n(\TC(H)))$ and $\TC(H) \cong \CB(\Compact(H),\C)$.

Now we let $F$ be any Hilbert module over $A$, countably generated by multipliers, and we choose a separable Hilbert space $H$ and an adjointable map $\theta: F\to C_0(\widehat{A},H)$ with $\theta^*\theta=\id_F$, as in Theorem \ref{thm:frame-properties}. The map $\Adjointable_A(F)\to \Adjointable_A(C_0(\widehat{A},H))$ defined by $t\mapsto \theta t \theta^*$ is an injective $*$-homomorphism.

\begin{lemma}[cf. {\cite[Theorem 3.5]{SvS}}]\label{lem:Theta}
For each $t\in \Adjointable_A(F)$ we have $t\in \TC_A(F)$ if and only if $\theta t \theta^*\in \TC_A(C_0(\widehat{A},H))$. If these inclusions hold then $\trace(t) = \trace(\theta t \theta^*)$. 
\end{lemma}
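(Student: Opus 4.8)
The plan is to transport a frame of multipliers from $E:=C_0(\widehat{A},H)$ to $F$ along $\theta$, exploiting the fact that by Theorem \ref{thm:tc-pointwise} the trace may be computed with respect to any frame of multipliers. Fix a frame of multipliers $(\gamma_i)$ for $E$---for instance the frame $(\widetilde{\epsilon_i})$ of Example \ref{example:free-module-frame}---and set $\beta_i:=\theta^*\gamma_i\in\Multiplier(F)$. First I would verify that $(\beta_i)$ is a frame of multipliers for $F$: using the adjoint relation $\langle\xi\,|\,\theta^*\gamma_i\rangle=\langle\theta\xi\,|\,\gamma_i\rangle$ together with $\theta^*\theta=\id_F$, one gets for all $\xi,\eta\in F$
\[
\sum_i\langle\xi\,|\,\beta_i\rangle\langle\beta_i\,|\,\eta\rangle=\sum_i\langle\theta\xi\,|\,\gamma_i\rangle\langle\gamma_i\,|\,\theta\eta\rangle=\langle\theta\xi\,|\,\theta\eta\rangle=\langle\xi\,|\,\eta\rangle,
\]
the middle equality being the frame identity for $(\gamma_i)$ applied to $\theta\xi,\theta\eta\in E$.

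The heart of the argument is the term-by-term identity
\[
\langle\beta_i\,|\,s\beta_i\rangle=(\theta^*\gamma_i)^*\,s\,(\theta^*\gamma_i)=\gamma_i^*\,(\theta s\theta^*)\,\gamma_i=\langle\gamma_i\,|\,(\theta s\theta^*)\gamma_i\rangle,
\]
which holds for every $s\in\Adjointable_A(F)$. It shows that the series defining $\trace_\beta(s)$ and $\trace_\gamma(\theta s\theta^*)$ coincide summand by summand, so one converges in $A$ if and only if the other does, and to the same element. For positive $s$ this is already the whole lemma: $s\in\TC_A(F)$ iff $\trace_\beta(s)$ exists in $A$ iff $\trace_\gamma(\theta s\theta^*)$ exists in $A$ iff $\theta s\theta^*\in\TC_A(E)$, with $\trace(s)=\trace(\theta s\theta^*)$.

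To handle a general (not necessarily positive) $t$, I would prove the operator identity $|\theta t\theta^*|=\theta|t|\theta^*$ in the $C^*$-algebra $\Adjointable_A(E)$. Using $\theta^*\theta=\id_F$ one finds $(\theta t\theta^*)^*(\theta t\theta^*)=\theta t^*t\theta^*$; on the other hand $\theta|t|\theta^*=(\theta|t|^{1/2})(\theta|t|^{1/2})^*$ is positive and its square is also $\theta t^*t\theta^*$, so uniqueness of the positive square root forces the identity. Applying the positive case to $s=|t|$ then yields the chain
\[
t\in\TC_A(F)\iff|t|\in\TC_A(F)\iff\theta|t|\theta^*\in\TC_A(E)\iff|\theta t\theta^*|\in\TC_A(E)\iff\theta t\theta^*\in\TC_A(E).
\]
For the trace identity I would feed $s=t$ directly into the term-by-term identity above---so that $\trace(t)$ and $\trace(\theta t\theta^*)$ are literally the same series---or, relying only on the positive case, decompose $t$ into a linear combination of positive trace-class operators and use linearity.

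The one step that is not purely formal is the reduction in the previous paragraph: trace-class membership is defined through the \emph{absolute value}, and conjugation by $\theta$ does not visibly commute with forming $|\cdot|$. The identity $|\theta t\theta^*|=\theta|t|\theta^*$ is therefore the crux, and it is precisely here that the isometry hypothesis $\theta^*\theta=\id_F$ is indispensable; everything else reduces to the clean, frame-by-frame bookkeeping of the first two paragraphs.
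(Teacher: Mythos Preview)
Your proposal is correct and follows exactly the paper's approach: pull back the frame of multipliers $(\widetilde{\epsilon_i})$ for $C_0(\widehat{A},H)$ along $\theta^*$, verify that $(\theta^*\widetilde{\epsilon_i})$ is a frame of multipliers for $F$, and observe that the two trace series agree term by term. You are more careful than the paper on one point: for a general (not necessarily positive) $t$ the definition of trace class involves $|t|$, so one needs the identity $|\theta t\theta^*|=\theta|t|\theta^*$, which you correctly derive from $\theta^*\theta=\id_F$ and uniqueness of positive square roots; the paper's one-line proof leaves this step to the reader.
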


\begin{proof}
The proof is the same as in \cite{SvS}: choose an orthonormal basis $(\epsilon_i)$ for $H$, and let $(\widetilde{\epsilon_i})$ be the corresponding frame of multipliers for $C_0(\widehat{A},H)$. The fact that $\theta^*\theta=\id_F$ ensures that the sequence ($\theta^*\widetilde{\epsilon_i}$) is a frame of multipliers for $F$, and that the series defining $\trace_{\theta^*\widetilde{\epsilon}}(t)$ and $\trace_{\widetilde{\epsilon}}(\theta t \theta^*)$ are identical term by term.
\end{proof}

Lemma \ref{lem:Theta} implies, firstly, that $\TC_A(F)$ is a linear subspace of $\Compact_A(F)$ (since Theorem \ref{thm:SvS} ensures that this is true when $F=C_0(\widehat{A},H)$). Moreover we can pull back the canonical operator-space structure on $\TC_A(C_0(\widehat{A},H))$ along the embedding $t\mapsto \theta t \theta^*$ to obtain an operator-space structure on $\TC_A(F)$, with the norm being $\|t\|_{\Adjointable_A(F)} = \trace(|t|)$. 

Now that we have equipped $\TC_A(F)$ with an operator-space structure, we can formulate and prove our main result:

\begin{theorem}\label{thm:ci}
Let $A$ be a commutative $C^*$-algebra, and let $F$ be a Hilbert $A$-module that is countably generated by multipliers. The map
\[
\phi_F : F^*\otimes^{\h}_A F \to \TC_A(F),\qquad \phi_F(\bra{\xi}\otimes \ket{\eta}) = \ket{\eta}\bra{\xi}
\]
is a completely isometric isomorphism, with respect to the operator-space structure induced on $\TC_A(F)$ by any adjointable isometry $\theta: F \to C_0(\widehat{A},H)$.
\end{theorem}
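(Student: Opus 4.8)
The plan is to prove the theorem first in the free case $F=C_0(\widehat{A},H)$ and then to reduce the general case to it by means of the isometry $\theta$. Write $E=C_0(\widehat{A},H)$. For the free case I would use the commuting square of Lemma \ref{lem:image-phi-2}: the map $\psi$ there is $A$-balanced, so it descends to a map $\bar\psi\colon E^*\otimes^{\h}_A E\to C_0(\widehat{A},H^*\otimes^{\h}H)$, and the square identifies $\phi_E$, up to the localisation $*$-isomorphism $\Compact_A(E)\cong C_0(\widehat{A},\Compact(H))$, with $\bar\psi$ followed by the fibrewise application of $\phi_H$. Since $\phi_H$ is a \emph{completely} isometric isomorphism onto $\TC(H)$ (the strengthening of Lemma \ref{lem:phiH} recorded after its statement), applying it fibrewise is a completely isometric isomorphism $C_0(\widehat{A},H^*\otimes^{\h}H)\xrightarrow{\ \cong\ }C_0(\widehat{A},\TC(H))=\TC_A(E)$. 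Thus the free case reduces entirely to showing that $\bar\psi$ is a completely isometric isomorphism.

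Two of the three ingredients for this come essentially for free. Complete contractivity of $\bar\psi$ is the matrix-level form of the estimate already used for $\psi$, and surjectivity follows from the earlier results: since $\image\phi_E=\TC_A(E)$ (Lemmas \ref{lem:image-phi-1} and \ref{lem:image-phi-2}) and the other maps in the square are isomorphisms, $\bar\psi$ must be onto. Moreover, the polar-decomposition construction of Lemma \ref{lem:image-phi-1} produces, for each $t\in\TC_A(E)$, a preimage $u'$ under $\phi_E$ with $\|u'\|_{\h}\le\|t\|_{\TC_A(E)}$; combined with contractivity this shows that the map induced by $\phi_E$ on $(E^*\otimes^{\h}_A E)/\ker\phi_E$ is isometric. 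The one remaining---and, I expect, genuinely hard---point is that $\ker\bar\psi=\ker\phi_E$ is trivial at every matrix level. This is exactly the localisation statement that the balanced Haagerup tensor product over $C_0(\widehat{A})$ is the $C_0$-section space of the field of fibrewise products $x\mapsto H^*\otimes^{\h}H$; I would obtain it either by citing the corresponding continuous-field description of the module Haagerup tensor product over a commutative $C^*$-algebra, or by arguing directly from the row/column structure of the Haagerup norm together with a partition of unity on $\widehat{A}$.

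To pass from $E$ to an arbitrary $F$ that is countably generated by multipliers, fix $\theta\colon F\to E$ with $\theta^*\theta=\id_F$. By functoriality of the balanced Haagerup tensor product under the completely contractive module maps induced by the adjointable contractions $\theta$ and $\theta^*$, one gets completely contractive maps $\Theta\colon F^*\otimes^{\h}_A F\to E^*\otimes^{\h}_A E$ and $\Theta'\colon E^*\otimes^{\h}_A E\to F^*\otimes^{\h}_A F$, determined on elementary tensors by $\bra{\xi}\otimes\ket{\eta}\mapsto\bra{\theta\xi}\otimes\ket{\theta\eta}$ and $\bra{\zeta}\otimes\ket{\omega}\mapsto\bra{\theta^*\zeta}\otimes\ket{\theta^*\omega}$. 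Because $\theta^*\theta=\id_F$ we have $\Theta'\Theta=\id$, so $\Theta$ is completely isometric. A direct computation on elementary tensors gives the two intertwining identities $\phi_E\circ\Theta=(t\mapsto\theta t\theta^*)\circ\phi_F$ and $\phi_F\circ\Theta'=(k\mapsto\theta^* k\theta)\circ\phi_E$.

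The theorem then follows by assembling these pieces. The operator-space structure on $\TC_A(F)$ is defined precisely so that $t\mapsto\theta t\theta^*$ is a completely isometric embedding; since $\phi_E$ is a completely isometric isomorphism and $\Theta$ is completely isometric, the first intertwining identity forces $\phi_F$ to be completely isometric. For surjectivity, given $t\in\TC_A(F)$ we have $\theta t\theta^*\in\TC_A(E)=\image\phi_E$, say $\theta t\theta^*=\phi_E(v)$; then the second identity together with $t=\theta^*(\theta t\theta^*)\theta$ yields $t=\phi_F(\Theta'(v))$, so $\phi_F$ is onto. The main obstacle throughout is the single free-case input that $\bar\psi$ is injective at every matrix level; everything else is formal manipulation of the intertwining diagrams and appeals to the results of the previous sections.
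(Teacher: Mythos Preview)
Your reduction of the general case to the free case $E=C_0(\widehat{A},H)$ via the isometry $\theta$ and the two intertwining squares is exactly what the paper does; that part is fine.

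The divergence is in the free case. The paper does not go through localisation at all there. Instead it makes the identifications $E\cong A\otimes^{\min}H$, $E^*\cong A\otimes^{\min}H^*$, $\TC_A(E)\cong A\otimes^{\min}(H^*\otimes^{\h}H)$, and reduces the question to showing that
\[
(A\otimes^{\min}H^*)\otimes^{\h}_A(A\otimes^{\min}H)\longrightarrow A\otimes^{\min}(H^*\otimes^{\h}H)
\]
is completely isometric. After embedding $H,H^*$ into a unital $C^*$-algebra $B$ and unitising $A$, this follows from Ozawa's completely isometric embedding $D\otimes^{\h}_C D\hookrightarrow D\star_C D$ together with the (universal-property) isomorphism $(\widetilde{A}\otimes^{\min}B)\star_{\widetilde{A}}(\widetilde{A}\otimes^{\min}B)\cong \widetilde{A}\otimes^{\min}(B\star_\C B)$. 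Injectivity of $\otimes^{\min}$ and $\otimes^{\h}_A$ transfers this back to the original spaces.

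Your approach via $\bar\psi$ is a reasonable alternative in spirit, but there is a genuine gap in how you frame the remaining difficulty. You write that the outstanding point is that $\ker\bar\psi=\ker\phi_E$ be ``trivial at every matrix level''. But injectivity at matrix level $n$ is automatic from injectivity at $n=1$ (since $\ker M_n(\bar\psi)=M_n(\ker\bar\psi)$), and injectivity together with complete contractivity and surjectivity does \emph{not} give complete isometry. Your polar-decomposition argument from Lemma~\ref{lem:image-phi-1} indeed shows $\|u'\|_{\h}\le\|\trace(|t|)\|$, hence scalar isometry of $\phi_E$; it says nothing about $M_n(\phi_E)$ for $n\ge 2$. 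What is actually needed is the reverse inequality at every matrix level, i.e.\ that $\bar\psi$ is completely isometric, which is precisely the content of the ``continuous-field description of the module Haagerup tensor product'' you propose to cite. That description is not established in the paper, is not a standard off-the-shelf fact at the level of complete isometry, and a partition-of-unity argument would have to produce, for each $u\in M_n(E^*\otimes^{\h}_A E)$, explicit matrix factorisations witnessing $\|u\|\le\sup_x\|\bar\psi(u)(x)\|_{M_n}$---which is essentially the same hard step as the one the paper handles via free products. So your plan is not wrong, but the ``one remaining point'' is misidentified: it is the complete lower bound for $\bar\psi$, not mere injectivity, and you have not supplied it.
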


The next lemma proves Theorem \ref{thm:ci} in the case $F=C_0(\widehat{A},H)$. The proof of the general case is given below the proof  of the lemma.

\begin{lemma}\label{lem:free-module-ci}
If $A$ is a commutative $C^*$-algebra, and $H$ is a separable Hilbert space, then the map $\phi_{C_0(\widehat{A},H)}$ gives a completely isometric isomorphism of operator spaces
\[
C_0(\widehat{A},H)^*\otimes^{\h}_A C_0(\widehat{A},H) \xrightarrow{\cong} \TC_A(C_0(\widehat{A},H)).
\]
\end{lemma}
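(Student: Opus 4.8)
Write $F\coloneqq C_0(\widehat A,H)$. The plan is to reduce everything to the fibrewise statement of Lemma~\ref{lem:phiH} by working with the descent of the map $\psi$ from Lemma~\ref{lem:image-phi-2}. Since $\psi$ is contractive for the Haagerup norm and annihilates the balancing relations (pointwise the balancing is over $\C$ and hence trivial), it descends to a map $\overline\psi:F^*\otimes^{\h}_A F\to C_0(\widehat A,H^*\otimes^{\h}H)$. The commuting diagram of Lemma~\ref{lem:image-phi-2} then exhibits $\phi_F$ as the composite $\phi_F=\Phi_H\circ\overline\psi$, where $\Phi_H:C_0(\widehat A,H^*\otimes^{\h}H)\to C_0(\widehat A,\TC(H))=\TC_A(F)$ is the fibrewise application $f\mapsto \phi_H\circ f$ (using Theorem~\ref{thm:SvS} for the identification $\TC_A(F)\cong C_0(\widehat A,\TC(H))$). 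Because $\phi_H$ is a \emph{completely} isometric isomorphism (the complete version being the well-known fact recorded after Lemma~\ref{lem:phiH}) and $M_n(C_0(\widehat A,V))\cong C_0(\widehat A,M_n(V))$ for any operator space $V$, the map $\Phi_H$ is itself a completely isometric isomorphism. Thus the whole lemma is equivalent to showing that $\overline\psi$ is a completely isometric isomorphism.

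Two of the three required properties of $\overline\psi$ are inexpensive. Complete contractivity is the matrix-level version of the estimate already used for $\psi$ in Lemma~\ref{lem:image-phi-2}: at each level $M_n$ the Haagerup norm is again an infimum over factorisations, and localisation commutes with matrix amplification, so the same bound applies verbatim. Surjectivity follows by combining the two containments from the previous section with the bijectivity of $\Phi_H$: Lemma~\ref{lem:image-phi-1} gives $\image\phi_F\supseteq \TC_A(F)=C_0(\widehat A,\TC(H))$, and since $\phi_F=\Phi_H\circ\overline\psi$ with $\Phi_H$ a bijection onto $C_0(\widehat A,\TC(H))$, the map $\overline\psi$ must be onto $C_0(\widehat A,H^*\otimes^{\h}H)$.

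The main obstacle is the isometric \emph{lower} bound: fixing $n$ and $u\in M_n(F^*\otimes^{\h}_A F)$, one must show $\|u\|_{\h}\le \sup_{x\in\widehat A}\|\overline\psi(u)(x)\|_{\h}$, which together with complete contractivity yields the complete isometry. I would prove this by a local-to-global argument that exploits the balanced ($A$-bimodule) structure. For each point $x_0$, choose a finite factorisation of the fibre $\overline\psi(u)(x_0)$ nearly realising its fibre Haagerup norm, lift the occurring vectors to genuine sections in $F=C_0(\widehat A,H)$, and use continuity to obtain a local representation of $u$ whose Haagerup norm is controlled by $\sup_x\|\overline\psi(u)(x)\|_{\h}+\epsilon$ on a neighbourhood of $x_0$. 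Since $\overline\psi(u)\in C_0$, its fibre norm vanishes at infinity, so finitely many such neighbourhoods cover the relevant compact region; choosing a subordinate partition of unity $(\rho_j)$ with $\sum_j\rho_j=1$ there, the key algebraic move is that, because we tensor over $A$ and each $\rho_j\ge 0$, one has $\rho_j\cdot(\bra{\xi}\otimes\ket{\eta})=\bra{\xi\rho_j^{1/2}}\otimes\ket{\eta\rho_j^{1/2}}$, distributing $\rho_j^{1/2}$ onto each leg. Writing $u=\sum_j\rho_j u$ and assembling the weighted local factorisations into a single factorisation, the resulting row and column sums $\sum_j\rho_j\langle\eta^{(j)}\,|\,\eta^{(j)}\rangle$ and $\sum_j\rho_j\langle\xi^{(j)}\,|\,\xi^{(j)}\rangle$ are controlled pointwise by the local bounds, giving the desired estimate.

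The delicate points in this last step — handling the non-unitality of $A$ via an approximate unit, absorbing the tail where the fibre norm is small into the $\epsilon$, and verifying that the glued factorisation's row and column sums are dominated by $\sup_x\|\overline\psi(u)(x)\|_{\h}$ rather than by a sum over the cover — are where the real care is needed, and I expect this gluing to be the technical heart of the proof. Once $\overline\psi$ is established as a completely isometric isomorphism, the factorisation $\phi_F=\Phi_H\circ\overline\psi$ immediately yields that $\phi_F$ is a completely isometric isomorphism $F^*\otimes^{\h}_A F\xrightarrow{\cong}\TC_A(F)$, proving the lemma. (Conceptually, the content of the argument is that the balanced Haagerup tensor product of the free module localises fibrewise, $F^*\otimes^{\h}_A F\cong C_0(\widehat A,H^*\otimes^{\h}H)$, an instance of the general principle that $\otimes^{\h}_{C_0(\widehat A)}$ is computed as a continuous field of fibrewise Haagerup tensor products.)
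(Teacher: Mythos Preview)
Your reduction to proving that $\overline\psi:F^*\otimes^{\h}_A F\to C_0(\widehat A,H^*\otimes^{\h}H)$ is a completely isometric isomorphism is correct, and your treatment of the easy direction and of surjectivity is fine. But your approach to the lower bound is \emph{genuinely different} from the paper's. The paper does not use a partition-of-unity argument at all: it identifies both sides with minimal tensor products, embeds $H,H^*$ into a unital $C^*$-algebra $B$ and $A$ into $\widetilde A$, and then invokes Ozawa's embedding of $D\otimes^{\h}_C D$ into the amalgamated free product $D\star_C D$ to reduce the question to showing that the $*$-homomorphism $(\widetilde A\otimes^{\min}B)\star_{\widetilde A}(\widetilde A\otimes^{\min}B)\to \widetilde A\otimes^{\min}(B\star_{\C}B)$ is an isomorphism, which follows from universal properties and nuclearity of $A$. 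This is slick and avoids any localisation or gluing; your route is more elementary in spirit but has more to check.

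There is, however, a real gap in your sketch of the lower bound. Lifting a near-optimal fibre factorisation of $\overline\psi(u)(x_0)$ to sections yields an element $v\in F^*\otimes^{\h}_A F$ with $\overline\psi(v)(x_0)=\overline\psi(u)(x_0)$; by continuity $\overline\psi(\rho(u-v))$ is small for $\rho$ supported near $x_0$. But to conclude that $\rho u=\rho v$ (or even that $\rho(u-v)$ is small in $F^*\otimes^{\h}_A F$) you would need injectivity of $\overline\psi$, which is precisely what you are trying to prove. So ``lift the occurring vectors\ldots and use continuity to obtain a local representation of $u$'' is circular as stated. A workable repair is to restrict first to $u=\sum_{i=1}^k\bra{\xi_i}\otimes\ket{\eta_i}$ in the algebraic balanced tensor product and, rather than lifting from the fibre, \emph{transform the given factorisation}: at each $x_0$ the optimal $k$-term factorisation of $\overline\psi(u)(x_0)$ is obtained from $(\xi_i(x_0),\eta_i(x_0))$ by the action of some $a(x_0)\in M_k(\C)$ (singular values), and since the tensor product is balanced over $A$ one may insert $a(x)\in M_k(A)$ between the row of $\bra{\xi_i}$'s and the column of $\ket{\eta_i}$'s without changing $u$. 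Choosing $a$ continuously near $x_0$ and then gluing with a partition of unity as you describe does give the bound, but this matrix-rewriting step---not an arbitrary lift---is the mechanism that makes the local factorisation a factorisation of $u$ itself.
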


\begin{proof}
We begin by making the completely isometric identifications
\[
C_0(\widehat{A},H)\cong A\otimes^{\min} H,\quad C_0(\widehat{A},H)^*\cong A\otimes^{\min} H^*, \quad \text{and} \quad 
\]
\[
\TC_A(C_0(\widehat{A},H))\cong C_0(\widehat{A},\TC(H))\cong A\otimes^{\min} \TC(H)\cong A\otimes^{\min} (H^*\otimes^{\h} H),
\]
where $\otimes^{\min}$ is the minimal tensor product of operator spaces; cf.  \cite[Proposition 1.5.3]{BLM}. Our assertion, then, is that  the map
\begin{equation}\label{eq:ci1}
\begin{aligned}
 (A\otimes^{\min} H^*) \otimes_A^{\h} (A\otimes^{\min} H) &\to A\otimes^{\min} (H^*\otimes^{\h} H), \\
 (a_1\otimes \bra{\xi})\otimes(a_2 \otimes \ket{\eta}) &\mapsto a_1 a_2\otimes (\bra{\xi}\otimes \ket{\eta})
\end{aligned}
\end{equation}
is a completely isometric isomorphism. This map obviously has dense image, so we are left to prove that it is completely isometric.

To prove this, embed $H$ and $H^*$ completely isometrically into a unital $C^*$-algebra $B$ (for example, the $C^*$-algebra of bounded operators on $H\oplus \C$), and embed $A$ into its minimal unitisation $\widetilde{A}$. Since $\otimes^{\min}$ and $\otimes^{\h}_A$ are injective, it will be enough to prove that the map
\begin{equation}\label{eq:ci2}
\begin{aligned}
(\widetilde{A}\otimes^{\min} B) \otimes^{\h}_{\widetilde{A}} (\widetilde{A}\otimes^{\min} B) & \to \widetilde{A}\otimes^{\min} (B\otimes^{\h} B), \\
(a_1\otimes b_1)\otimes (a_2 \otimes b_2) &\mapsto a_1 a_2\otimes (b_1\otimes b_2) 
\end{aligned}
\end{equation}
is completely isometric. Now, if $C$ is a unital $C^*$-subalgebra of a unital $C^*$-algebra $D$, then the Haagerup tensor product $D\otimes^{\h}_C D$ embeds completely isometrically into the amalgamated free product $D\star_C D$, via the map $d_1\otimes d_2\mapsto \delta_1(d_1)\delta_2(d_2)$ (where the $\delta_i$ are the canonical maps from $D$ into the free product). This fact was pointed out in \cite{Ozawa},  generalising earlier results from \cite{CES} and \cite{Pisier-Kirchberg}. Applying this fact to $D=\widetilde{A}\otimes^{\min} B$ and $C=\widetilde{A}\otimes^{\min} \C 1_B$, we find that in order to prove that \eqref{eq:ci2} is a complete isometry it will suffice to prove that the $*$-homomorphism
\begin{equation}\label{eq:free-products}
(\widetilde{A}\otimes^{\min} B) \star_{\widetilde{A}} (\widetilde{A}\otimes^{\min} B)  \to \widetilde{A}\otimes^{\min} (B\star_{\C} B)
\end{equation}
induced by the $*$-homomorphisms $\id\otimes \beta_i : \widetilde{A}\otimes^{\min} B \to \widetilde{A}\otimes^{\min}(B\star_{\C} B)$ is injective. A comparison of the universal properties on each side (recalling that $A$ is nuclear) shows that \eqref{eq:free-products} is in fact an isomorphism, and this completes the proof that \eqref{eq:ci1} is completely isometric.
\end{proof}

\begin{proof}[Proof of Theorem \ref{thm:ci}]
Choose an adjointable isometry $\theta : F\to C_0(\widehat{A},H)$. The diagram 
\[
\xymatrix@C=60pt{
F^*\otimes^{\h}_A F \ar[r]^-{\phi_F} \ar[d]_-{\bra{\xi}\otimes \ket{\eta}\mapsto \bra{\theta\xi}\otimes \ket{\theta\eta}} & \Compact_A(F) \ar[d]^-{t\mapsto \theta t \theta^*} \\
C_0(\widehat{A},H)^*\otimes^{\h}_A C_0(\widehat{A},H) \ar[r]^-{\phi_{C_0(\widehat{A},H)}} & \Compact_A(C_0(\widehat{A},H))
}
\]
is easily seen to commute. The left-hand vertical arrow is a complete isometry because $\theta$ is a complete isometry, and because $\otimes^{\h}_A$ is injective. Lemma \ref{lem:free-module-ci} implies that the image of   $\phi_{C_0(\widehat{A},H)}$ is precisely $\TC_A(C_0(\widehat{A},H))$, and so Lemma \ref{lem:Theta} ensures that the image of $\phi_F$ is contained in $\TC_A(F)$. Moreover, Lemma \ref{lem:free-module-ci} implies that $\phi_{C_0(\widehat{A},H)}$ is a complete isometry into $\TC_A(C_0(\widehat{A},H))$, and the operator-space structure on $\TC_A(F)$ is defined so that the right-hand vertical arrow is a complete isometry $\TC_A(F)\to \TC_A(C_0(\widehat{A},H))$, and hence $\phi_F:F^*\otimes^{\h}_A F\to \TC_A(F)$ is a complete isometry.

We are left to show that $\image\phi_F = \TC_A(F)$, which we do by reversing the vertical arrows in the above diagram, yielding the diagram
\[
\xymatrix@C=60pt{
F^*\otimes^{\h}_A F \ar[r]^-{\phi_F}  & \TC_A(F)  \\
C_0(\widehat{A},H)^*\otimes^{\h}_A C_0(\widehat{A},H) \ar[r]^-{\phi_{C_0(\widehat{A},H)}} \ar[u]^-{\bra{\xi}\otimes \ket{\eta}\mapsto \bra{\theta^*\xi}\otimes \ket{\theta^*\eta}} & \TC_A(C_0(\widehat{A},H)) \ar[u]_-{t\mapsto \theta^* t \theta}
}
\]
which still commutes. Both vertical arrows in this diagram are surjective (obvious on the left, and Lemma \ref{lem:Theta} on the right), and $\phi_{C_0(\widehat{A},H)}$ is also surjective (Lemma \ref{lem:free-module-ci}), and so $\phi_F$ is surjective onto $\TC_A(F)$.
\end{proof}

\section{An example from harmonic analysis}\label{sec:example}

Let $G$ be a locally compact group. For each closed subgroup $H$ of $G$ we denote by $I_H$ the $C^*(G)$-$C^*(H)$-bimodule constructed by Rieffel in \cite{Rieffel-induced} to represent the functor of unitary induction of representations from $H$ to $G$. Referring to \cite{Rieffel-induced} or \cite[Appendix C]{Raeburn-Williams} for the details, we recall briefly that  $I_H$ is the completion of $C_c(G)$ in the norm induced by a  $C^*(H)$-valued inner product defined by convolving functions on $G$ and then restricting to $H$; and that the actions of $H$ and of $G$ on $C_c(G)$ by right and left translation, respectively, turn $I_H$ into a right Hilbert $C^*(H)$-module, equipped with a $*$-homomorphism $C^*(G)\to \Adjointable_{C^*(H)}(I_H)$. In particular, $I_H$ is an operator $C^*(G)$-$C^*(H)$-bimodule. The most important property of this bimodule is that if $V$ is a Hilbert space equipped with a unitary representation $\chi$ of $H$, then $I_H\otimes^{\h}_{C^*(H)} V$ is isomorphic to the induced unitary representation $\Ind_H^G\chi$ of $G$.

The Haagerup tensor product 
\[
A_H(G)\coloneqq I_H^* \otimes^{\h}_{C^*(G)} I_H
\] 
is an operator $C^*(H)$-bimodule. In \cite{Crisp-descent} it was shown that if $H$ is cocompact in $G$ then $A_H(G)$ carries an algebraic structure---it is a coalgebra over $C^*(H)$, with respect to the Haagerup tensor product---and that this coalgebra is  Morita equivalent, in a suitable sense, to $C^*(G)$. Even in cases where $H$ is not cocompact in $G$, computing the operator bimodule $A_H(G)$ appears to be an interesting problem. In the two extreme cases, we have $A_G(G)=C^*(G)$, the group $C^*$-algebra; and $A_{\{1\}}(G)=A(G)$, Eymard's Fourier algebra \cite{Eymard-Fourier}. The family of bimodules $A_H(G)$ can be seen as interpolating between these two extremes.

Here we will consider the case where $G$ is a compact, second-countable, abelian group. The Pontryagin dual $\widehat{G}$ is then a countable discrete group, and the Fourier transform gives completely isometric isomorphisms $A_G(G)\cong C_0(\widehat{G})$ and $A_{\{1\}}(G)\cong \ell^1(\widehat{G})$. We will use Theorem \ref{thm:ci} to describe $A_H(G)$, as a space of functions on $\widehat{G}$, for an arbitrary closed subgroup $H$ of $G$. 

\begin{definition}
Let $H$ be a closed subgroup of a compact, second-countable, abelian group $G$. For each character $\chi\in \widehat{H}$ we define
\[
\widehat{G}_\chi \coloneqq \left\{\phi\in \widehat{G}\ \middle|\ \phi\restrict_H = \chi\right\}.
\]
We let $\ell_H(\widehat{G})$ denote the completion of $C_c(\widehat{G})$ in the norm
\[
\| f\|_{\ell_H} \coloneqq \sup_{\chi\in \widehat{H}} \Big(\sum_{\phi\in \widehat{G}_\chi } |f(\phi)|\Big).
\]
There is an obvious isometric identification
\begin{equation}\label{eq:l_H-iso}
\ell_H(\widehat{G}) \cong \left\{ f\in C_0(\widehat{H}, \ell^1(\widehat{G}))\ \middle|\ f(\chi)\in \ell^1(\widehat{G}_\chi)\ \text{ for all }\chi\in \widehat{H} \right\},
\end{equation}
and $C_0(\widehat{H},\ell^1(\widehat{G}))$ is an operator space via the identifications $M_n\left(C_0(\widehat{H},\ell^1(\widehat{G}))\right) \cong C_0\left(\widehat{H}, M_n(\ell^1(\widehat{G}))\right)$ and $\ell^1(\widehat{G})\cong \CB(C_0(\widehat{G}),\C)$. We equip $\ell_H(\widehat{G})$ with the operator-space structure that it inherits as a closed subspace of $C_0(\widehat{H},\ell^1(\widehat{G}))$.
\end{definition}

\begin{theorem}
Let $H$ be a closed subgroup of a compact, second-countable, abelian group $G$. There is a completely isometric isomorphism $A_H(G)\cong \ell_H(\widehat{G})$.
\end{theorem}

\begin{proof}
The fact that tensor product with $I_H$ implements unitary induction of representations, together with Frobenius reciprocity, implies that for each $\chi\in \widehat{H}$ we have 
\[
(I_H)_{\chi} \cong \Ind_H^G \chi \cong \ell^2(\widehat{G}_\chi),
\] 
the direct sum of the one-dimensional $G$-representations $\phi\in \widehat{G}_\chi$. We thus have a unitary isomorphism of Hilbert modules over $C^*(H)\cong C_0(\widehat{H})$,
\begin{equation}\label{eq:I_H-Fourier}
I_H \cong \left\{ \xi \in C_0(\widehat{H},\ell^2(\widehat{G}))\ \middle|\ \xi(\chi)\in \ell^2(\widehat{G}_\chi )\ \text{for each}\ \chi\in \widehat{H} \right\}.
\end{equation}
The action of $C^*(G)$ on the right-hand side of \eqref{eq:I_H-Fourier} is the fibrewise action on each of the $G$-representations $\ell^2(\widehat{G}_\chi)$.

Since $G$ is second-countable the Hilbert space $\ell^2(\widehat{G})$ is separable, and so the picture of $I_H$ given in \eqref{eq:I_H-Fourier} shows (thanks to Theorem \ref{thm:frame-properties}) that $I_H$ is countably generated by multipliers. Combining the identification \eqref{eq:I_H-Fourier} with Theorem \ref{thm:SvS} and Lemma \ref{lem:Theta} gives a competely isometric isomorphism between the space of trace-class operators $\TC_{C^*(H)}(I_H)$ and the space
\begin{equation}\label{eq:TC-Fourier}
\mathcal{L}_H\coloneqq  \left\{ t\in C_0\left(\widehat{H},\TC(\ell^2(\widehat{G}))\right)\ \middle|\ t(\chi)\in \TC(\ell^2(\widehat{G}_\chi ))\ \text{for each}\ \chi\in \widehat{H} \right\}.
\end{equation}
For each $\chi\in \widehat{H}$ we have a completely isometric embedding $\ell^1(\widehat{G}_\chi)\into \TC(\ell^2(\widehat{G}_\chi))$, sending each $\ell^1$-function to the associated pointwise-multiplication operator. In conjunction with \eqref{eq:l_H-iso} and \eqref{eq:TC-Fourier}, this observation shows that we have a completely isometric embedding $\ell_H(\widehat{G})\into \mathcal{L}_H$.

Theorem \ref{thm:ci} gives a completely isometric isomorphism
\begin{equation}\label{eq:I_H-TC}
I_H^*\otimes^{\h}_{C^*(H)} I_H \cong \TC_{C^*(H)}(I_H) \cong \mathcal{L}_H.
\end{equation}
The space $A_H(G)=I_H^*\otimes^{\h}_{C^*(G)} I_H$ is the quotient of $I_H^*\otimes^{\h}_{C^*(H)} I_H$ by the subspace 
\[
\overline{\lspan}\{ \bra{\xi}a\otimes \ket{\eta}-\bra{\xi}\otimes a\ket{\eta}\ |\ \xi,\eta\in I_H,\ a\in C^*(G)\}.
\]
(Here it is important that the subgroup $H$ is central in $G$, so that the actions of $C^*(H)$ on $I_H$ by left and right translations coincide.) A standard approximation argument shows that the latter subspace is equal to 
\[
\overline{\lspan}\{\bra{\xi}g\otimes \ket{\eta} - \bra{\xi}\otimes g\ket{\eta}\ |\ \xi,\eta\in I_H,\ g\in G\},
\] 
which is in turn equal to
\[
\overline{\lspan}\{ \bra{\xi}g\otimes g^{-1}\ket{\eta} - \bra{\xi}\otimes \ket{\eta}\ |\ \xi,\eta\in I_H,\ g\in G\}.
\]
The action of $G$ on $I_H^*\otimes^{\h}_{C^*(H)} I_H$ given by $g:\bra{\xi}\otimes \ket{\eta}\mapsto \bra{\xi}g\otimes g^{-1}\ket{\eta}$ corresponds, under the isomorphism \eqref{eq:I_H-TC}, to the fibrewise (over $\widehat{H}$) action of $G$ on $\TC(\ell^2(\widehat{G}))$ by conjugation: that is, for each function $t\in \mathcal{L}_H$,  each $g\in G$, and each $\chi\in \widehat{H}$, we have
\[
(g\cdot t)(\chi) = \hat{\lambda}_g^* t(\chi) \hat{\lambda}_g
\]
where $\hat{\lambda}:G\to \Unitary(\ell^2(\widehat{G}))$ denotes the Fourier transform of the regular representation. This action of $G$ on $\mathcal{L}_H$ is by complete isometries, because over each $\chi\in\widehat{H}$, each $g\in G$ acts as the dual of a $*$-automorphism of $\Compact(\ell^2(\widehat{G}_\chi))$.

Since $G$ is a compact group,  integration over $G$ gives a completely isometric isomorphism between the coinvariant space
\[
\mathcal{L}_H/ \overline{\lspan}\{g\cdot t - t\ |\ t\in \mathcal{L}_H,\ g\in G\}
\]
and the space of invariants
\[
\mathcal{L}_H^G = \{ t\in \mathcal{L}_H\ |\ g\cdot t = t\ \text{for all}\ g\in G\}.
\]
For $t\in \mathcal{L}_H$ we have $g\cdot t = t$ for all $g\in G$ if and only if, for each $\chi\in \widehat{H}$, the trace-class operator $t(\chi)\in \TC(\ell^2(\widehat{G}_\chi))$ commutes with the regular representation $\hat{\lambda}$---which is to say, if and only if $t(\chi)$ is the operator of pointwise multiplication by some $\ell^1$-function on $\widehat{G}_\chi$. 
Thus  $\mathcal{L}_H^G$  is precisely the subspace of $\mathcal{L}_H$ that we previously identified, completely isometrically, with $\ell_H(\widehat{G})$.

In summary, we have produced a chain of completely isometric isomorphisms 
\begin{align*}
A_H(G) & \cong \left(I_H^*\otimes^{\h}_{C^*(H)} I_H\right) / \overline{\lspan}\{ \bra{\xi}a\otimes \ket{\eta}-\bra{\xi}\otimes a\ket{\eta}\ |\ \xi,\eta\in I_H,\ a\in C^*(G)\} \\
& \cong \mathcal{L}_H/ \overline{\lspan}\{g\cdot t - t\ |\ t\in \mathcal{L}_H,\ g\in G\} 
 \cong \mathcal{L}_H^G  \cong \ell_H(\widehat{G}).\qedhere
\end{align*}
\end{proof}

\section*{Acknowledgements}
We gratefully acknowledge support from the Royal Irish Academy, the American Mathematical Society, and the Simons Foundation. We thank the referee for their careful reading and helpful suggestions.

\bibliographystyle{alpha}
\bibliography{traceclass.bib}
 
 \end{document}